\theoremstyle{theorem}
\newtheorem*{theorem*}{Theorem}
\newtheorem*{episode1}{Episode 1}
\newtheorem*{episode2}{Episode 2}
\newtheorem*{episode3}{Episode 3}
\newtheorem*{episode1a}{Episode 1'}
\newtheorem*{episode2a}{Episode 2'}
\newtheorem*{episode3a}{Episode 3'}
      \newtheorem{theorem}{Theorem}[section]
      \newtheorem{challenge}[theorem]{Challenge}
      \newtheorem{example}[theorem]{Example}
      \newtheorem{corollary}[theorem]{Corollary}
      \newtheorem{lemma}[theorem]{Lemma}
      \newcommand{\R}{{\mathbb R}}
      \newcommand{\C}{\mathbb C}
      \newcommand{\Z}{\mathbb Z}
	\newcommand{\boundary}{\partial}
	\newcommand{\mc}[1]{\mathcal{#1}}
	\newcommand{\vect}[1]{\mathbf{#1}}
	\newcommand{\defn}[1]{\textbf{#1}}
	\newcommand{\ob}[1]{\overline{#1}}
	\newcommand{\wihat}[1]{\widehat{#1}}
	\newcommand{\im}{\operatorname{im}}
	\newcommand{\diver}{\operatorname{div}}
	\newcommand{\grad}{\operatorname{grad}}
	\newcommand{\vl}{\cdot d\vect{s}}
	\newcommand{\scurl}{\operatorname{scurl}}
	\newcommand{\same}{\bumpeq}
	\newcommand{\cint}{\Rbag}
	\newcommand{\ciint}{\Rbag}
	\newcommand{\llpartial}[2]{\frac{\partial #1}{\partial #2}}
	\newcommand{\co}{\mskip0.5mu\colon\thinspace}
\newcommand{\ebb}{\operatorname{ebb}}
\newcommand{\whirl}{\operatorname{whirl}}
\newcommand{\tilt}{\operatorname{tilt}}
\newcommand{\area}{\operatorname{area}}
\title[3 stooges of Vector Calculus]{The 3 stooges of Vector Calculus and their impersonators: A viewer's guide to the classic episodes}
\author{Jennie Buskin}
\author{Philip Prosapio}
\author{Scott A. Taylor}
\begin{document}
\begin{abstract}
The basic theorems of vector calculus are illuminated when we replace the original 3 stooges of vector calculus, Grad, Div, and Curl, with combinatorial substitutes.
\end{abstract}
\maketitle
Grad, Div, and Curl are the three stooges of Vector Calculus: their loveable, but hapless, interactions are viewed with mixtures of delight, puzzlement, and bewilderment by thousands of \sout{students} viewers. The classic episodes involving Grad and Curl include\footnote{Fans of the 3 stooges of Vector Calculus will recognize that Div has been neglected in our list of classic episodes. Throughout this viewer's guide, we invite the reader to fill in the missing episodes which feature this neglected character.}: 
\begin{itemize}
\item[\textbf{Episode 1}] \underline{Horsing Around}: In this hilarious episode, we see that a vector field running in circles does no work if and only if it is a gradient field.
\item[\textbf{Episode 2}] \underline{Violent is the Word for Curl}: Nothing happens when Curl hits Grad who hits a scalar field.
\item[\textbf{Episode 3}] \underline{Grips, Grunts, and Green's}: Mr. Green tries to circulate around a boundary only to find Curl appearing in surprising places.
\end{itemize}

Despite the delight that often greets these classic episodes\footnote{See Section \ref{Scenery and Actors} for precise statements of the episodes/theorems}, audiences often have some difficulty in making sense of the basic plot lines. We maintain that these issues are due, in part, to Grad, Div, and Curl's excellent acting. With great aplomb they manage to combine ideas from calculus, geometry, and topology. In this viewer's guide, we show how the essence of each episode is clarified if we substitute the coarse actors\footnote{``What are the outstanding characteristics of a Coarse Actor?  Firstly I should say a desperate desire to impress.  The true Coarse Actor is most anxious to succeed.  Of course, he is hampered by an inability to act or to move, and a refusal to learn his lines, but no one is more despairing if he fails.  In reality, though, Coarse Actors will never admit that they have done badly.  Law One of Coarse Drama states: `In retrospect all performances are a success''' \cite[page 29]{MG}} Tilt, Ebb, and Whirl for the good actors Grad, Div, and Curl in each of the classic episodes. (Although, as we mentioned, we leave the episodes involving Ebb to the true aficianados.) These coarse actors merely approximate the good actors; they are defined without the use of limits. 

Section \ref{Scenery and Actors} describes the scenery, introduces the actors, and shows how the classic episodes can be reinterpreted so as to make the basic plot lines more evident. In addition to their simplicity, the episodes with the coarse actors have another advantage over the classic episodes: they do not have to consider the possibility that paths intersect infinitely many times. For example, letting \[h(t) = \Big\{\begin{array}{lr} t^2\sin(1/t) \hspace{.2in} & t \neq 0 \\ 0 & t = 0 \end{array}\] for $t \in [0,1]$, the curves $\gamma(t) = (t,0)$ and $\psi(t) = (t, h(t))$  are distinct smooth curves that intersect infinitely often in a neighborhood of the origin. Such curves create challenges that are usually ignored in vector calculus classes\footnote{See for example \cite[page 441]{Colley}}. In the knock-off episodes, however, any two simple curves that intersect infinitely many times actually coincide.

Not only do the knock-off episodes add conceptual clarity, they can also be used to reconstruct the original episodes. Section \ref{Section: Refining} shows that, in fact, the only drawback to the knock-off episodes is their scenery and that by refining the scenery, the acting improves. In more conventional language: by taking limits we can recover the classic episodes from our approximations. In particular, we show that the coarse actor Whirl becomes the good actor Curl and our imitation Episode 3 becomes the  actual Episode 3.

Finally, Section \ref{Cohomology}, shows how the Good Actor's Guild (also known as de Rham cohomology\footnote{Strictly speaking, the cohomology theory given by Grad, Div, and Curl shouldn't be called de Rham cohomology since de Rham cohomology is usually defined using differential forms. However, the two cohomology theories are similar enough that we appropriate the name.}) to which Grad, Div, and Curl belong has a lot in common with the Coarse Actor's Guild (also known as simplicial cohomology) to which Tilt, Ebb, and Whirl belong. These labor unions organize the actors and clarify their working environment.

As always, there's fine print: To simplify matters, we work for the most part in 2-dimensions (although in Section \ref{Cohomology} we point to resources for generalizing these ideas to higher dimensions). Also, there are various topological issues (usually pertaining to the classification of surfaces and the Sch\"onflies theorem) that are ignored. The \textit{cognoscenti} can fill in the missing details without problems, while the new viewer won't notice their absence.

\section{The scenery and actors}\label{Scenery and Actors}
We begin by reviewing the scenery and actors from the classic episodes and then we construct the cheap scenery and introduce the coarse actors.

\subsection{The classic scenery}

The action takes place on a compact, orientable, smooth surface $S$ embedded in $\R^n$. \defn{Smooth} means that there is a tangent plane at every point $x$ of $S$ and the tangent planes vary continuously as $x$ moves around the surface. Figure \ref{Fig: Smooth Surface} shows a smooth surface in $\R^3$. 

\begin{figure}[ht]\label{SmoothSurface}
\includegraphics[scale=.3]{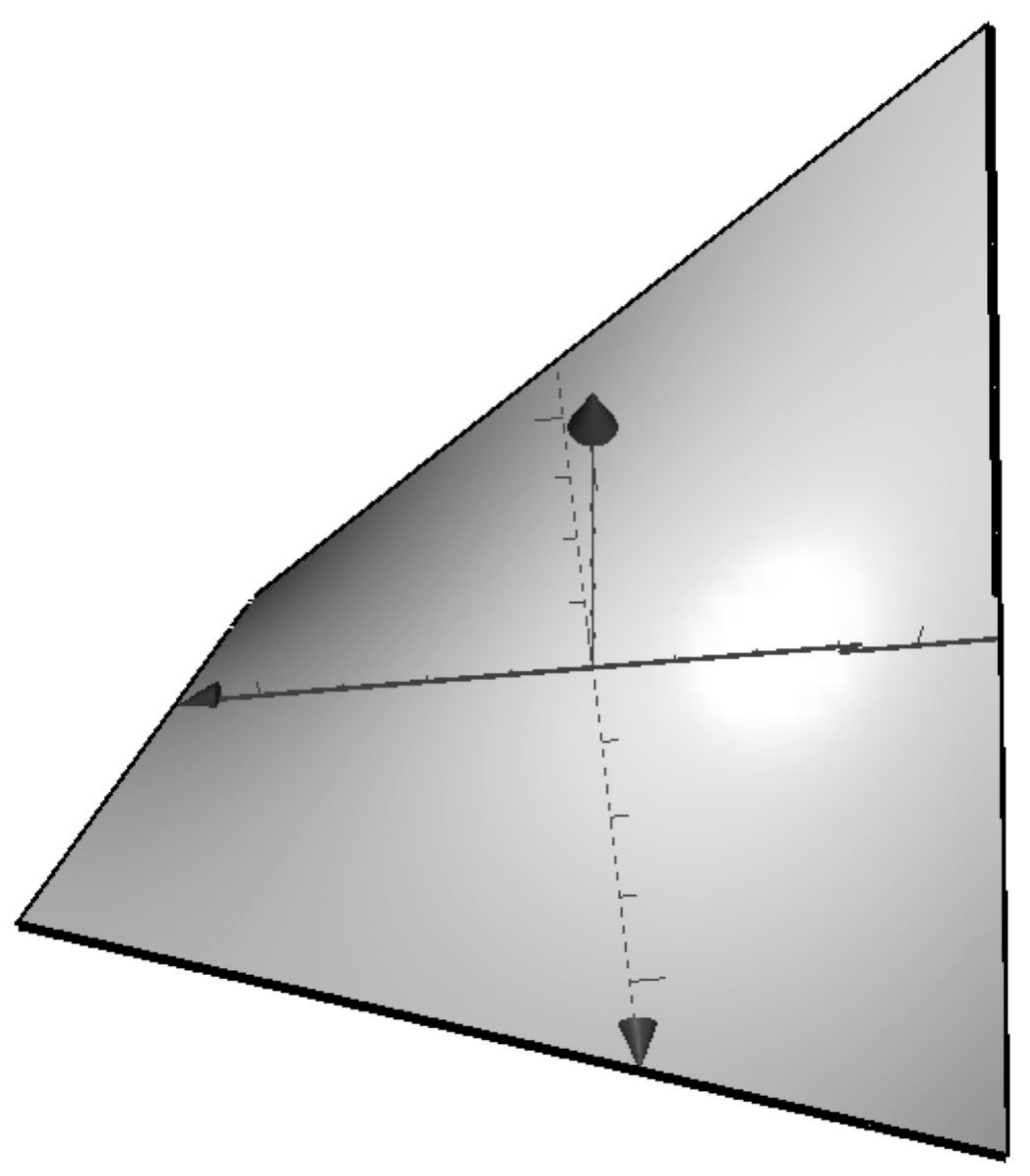}
\caption{A smooth surface in $\R^3$. It is the image of the function $\Phi(s,t) = (s,t,st)$ for $(s,t) \in [-1,1] \times [-1,1]$.}
\label{Fig: Smooth Surface}
\end{figure}

Most of the surfaces that appear in this viewer's guide will be embedded in $\R^2$. At each point of such a surface, the tangent plane coincides with $\R^2$ itself. 

\subsubsection{Fields} A \defn{scalar field} on $S$ is a function $f\co S \to \R$ that is differentiable and whose derivative is continuous. (A differentiable function with continuous derivative is said to be of class C$^1$). A \defn{vector field}\footnote{Vector fields should really take values in the tangent plane to the surface at the points, but we stick with the traditional vector calculus definition.} on $S$ is a C$^1$ function $\vect{F}\co S \to \R^2$. When $n = 2$ (i.e. when $S$ is embedded in the plane), we often write $\vect{F} = \begin{pmatrix} M \\ N \end{pmatrix}$ where $M$ and $N$ are C$^1$ functions from $S$ to $\R$. Generally, a scalar field $f$ on a surface is pictured by shading the surface by making points with large $f$ values light and points with small $f$ values dark. A vector field $\vect{F}$ on a surface is pictured by drawing an arrow based at $x \in S$ pointing in the direction of $\vect{F}(x)$ and of length $||\vect{F}(x)||$. We think of a vector field as telling us the direction of motion and the speed of motion. Figure \ref{SF/VF} shows a scalar field and a vector field on the unit disc.

\begin{figure}[ht]
\includegraphics[scale=.3]{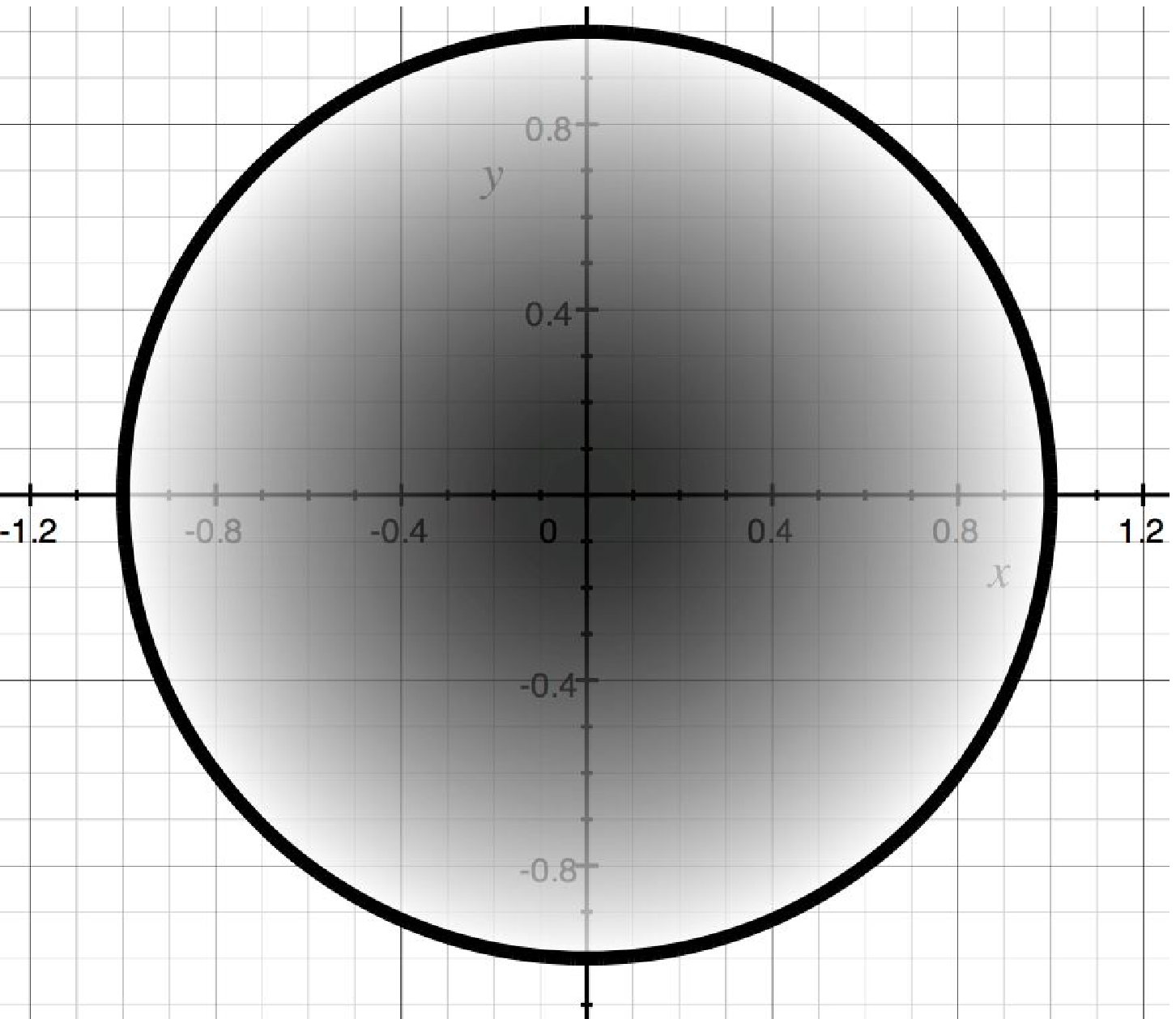}
\includegraphics[scale=.3]{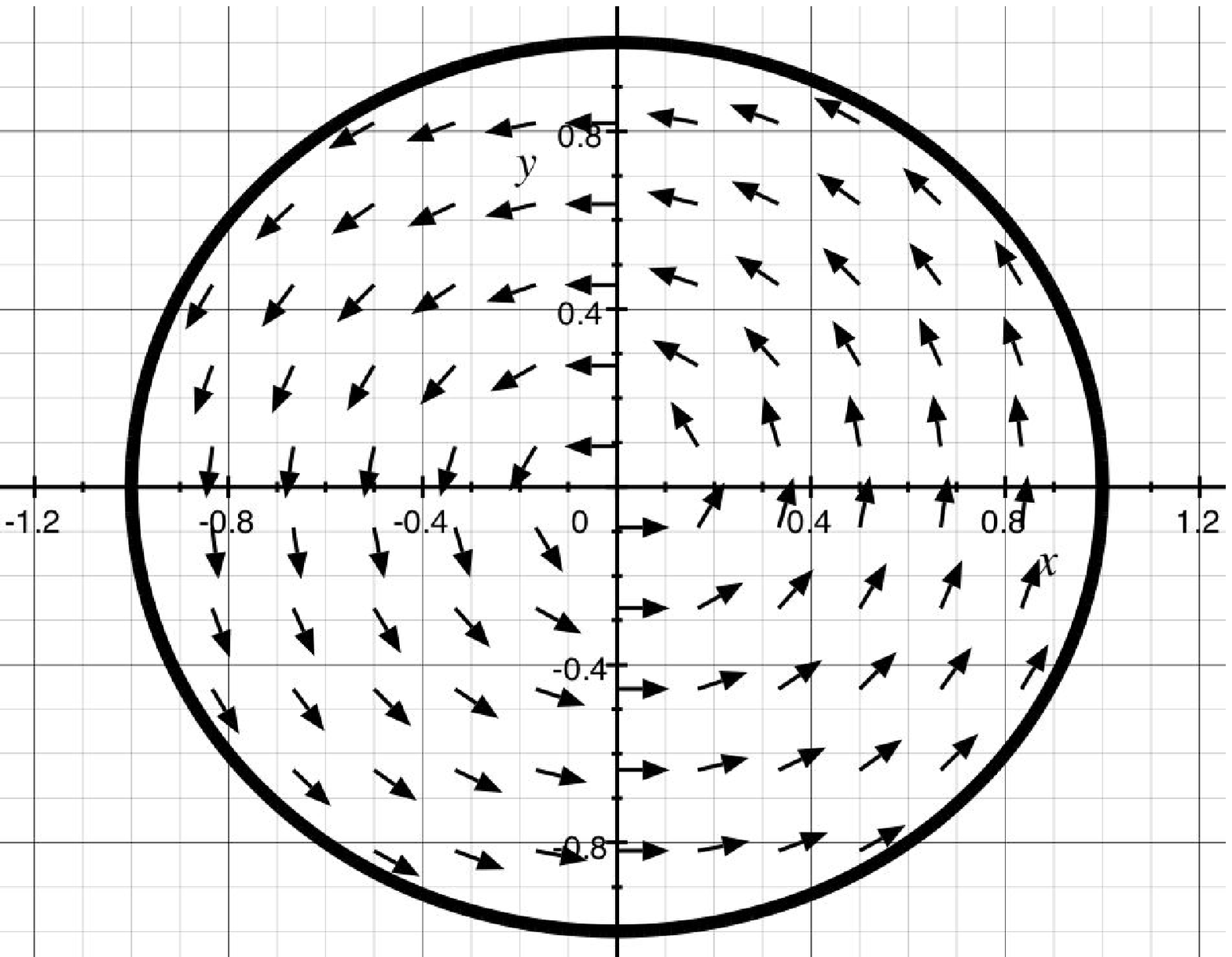}
\caption{On the left is the scalar field $f(x,y) = x^2 + y^2$ on the unit disc in $\R^2$ and on the right is (a portion of) the vector field $\vect{F}(x,y) = (-y,x)/\sqrt{x^2+y^2}$ on the unit disc.}
\label{SF/VF}
\end{figure}

A \defn{path} on $S$ is a continuous function $\phi \co [a,b] \to S$ for some interval $[a,b] \subset \R$. Unless we say otherwise, we require the path to be piecewise C$^1$. That is, there are $t_0, \hdots, t_m \in [a,b]$ with
\[
a = t_0 < t_1 < \hdots < t_{m-1} < t_m = b
\]
such that the restriction of $\phi$ to each subinterval $[t_i, t_{i+1}]$ is continuously differentiable with non-zero derivative at every point. (At the endpoints of the interval we require that the one-sided derivatives exist, are continuous, and are non-zero.) If $\phi$ and $\psi$ are two paths with the same image in $S$, we say that they have the \defn{same orientation} if, as $t$ increases $\phi$ and $\psi$ traverse the image of each C$^1$ portion of the image in the same direction. Otherwise, we say that $\phi$ and $\psi$ have \defn{opposite orientations}. We indicate the orientation by drawing arrows on the image of the path as in Figure \ref{PathFig}. The path $\phi$ is a \defn{closed curve} if $\phi(a) = \phi(b)$ and the path $\phi$ is \defn{embedded} if it is one-to-one on the intervals $[a,b)$ and $(b,a]$.

\begin{figure}[ht]
\includegraphics[scale=.3]{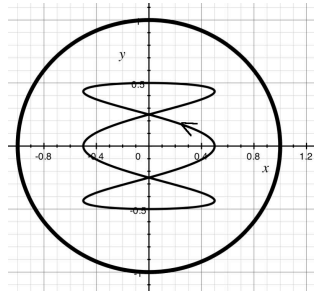}
\caption{The image of the path $\phi(t) = (.5\cos(3t), .5\sin(t))$ for $0 \leq t\leq 2\pi$ in the unit disc. It is a non-embedded closed curve. The orientation of the path is marked with an arrow.}
\label{PathFig}
\end{figure}

\subsubsection{Integrals}

The \defn{integral} of a scalar field $f$ over a path $\phi\co[a,b] \to S$ is defined to be
\[
\int_\phi f \thinspace ds = \int_a^b f(\phi(t)) ||\phi'(t)|| \thinspace dt.
\]
We think of it as measuring the ``total amount'' of $f$ on the path $\phi$. 

The \defn{integral} of a vector field $\vect{F}$ over $\phi$ is defined to be
\begin{equation}\label{Defn vl}
\int_\phi \vect{F} \vl = \int_a^b \vect{F}(\phi(t))\cdot \phi'(t) \thinspace dt.
\end{equation}
We think of it as measuring the total amount that $\vect{F}$ points in the same direction as $\phi$. The integral also measures the amount of ``work'' done by the vector field $\vect{F}$ on an object travelling along the path $\phi$. The classic version of \hyperref[Episode 1]{Episode 1} shows that vector fields produced by Grad don't do any work on closed curves.  A straightforward (and standard) calculation shows that if we replace $\phi$ by another path $\psi$ with the same image, then the integrals of $\vect{F}$ over $\phi$ and $\psi$ will be the same if $\phi$ and $\psi$ have the same orientation and they will be the same in absolute value, but opposite in sign, if $\phi$ and $\psi$ have opposite orientations.

\subsection{The good actors and their classic episodes}

\textit{Lights! Camera! Action!} As the scene opens, only a surface\footnote{Of course, much of what follows can be done in higher dimensions, but for simplicity we stick to surfaces in the plane.} $S \subset \R^2$, a scalar field $f$ on $S$ and a vector field $\vect{F}= \begin{pmatrix} M \\ N \end{pmatrix}$ on $S$  are visible. After a brief moment, our heros enter -- in disguise!

Grad is the first, and best known, of the three stooges. It is the vector field on $S$ defined by the equation
\begin{equation}\label{Grad Def}
\grad f = \begin{pmatrix} \llpartial{f}{x} \\ \llpartial{f}{y} \end{pmatrix}.
\end{equation}

Div is the scalar field on $S$ defined by
\begin{equation}\label{Div Disguised}
\diver \begin{pmatrix} M \\ N \end{pmatrix} = \llpartial{M}{x} + \llpartial{N}{y}.
\end{equation}

Curl\footnote{Usually, Curl is defined so that it is a vector-valued function. Since the classic episodes only involve surfaces, however, we simply use scalar curl.}  is defined by:
\begin{equation}\label{Disguised Curl}
\scurl \begin{pmatrix} M \\ N \end{pmatrix} = \llpartial{N}{x} - \llpartial{M}{y}.
\end{equation}

Although useful for computations, these definitions disguise the true identities of Grad, Div, and Curl since they give no indication of what is being measured, or why such expressions are useful. 

Grad's disguise is easiest to remove. Standard manipulations (see, for example, \cite[page 161]{Colley}) with the definition of the directional derivative, show that the instantaneous rate of change of the scalar field $f$ in the direction of a unit vector $\vect{u}$ is equal to $\grad f \cdot \vect{u}$. This quantity is maximized when $\vect{u}$ points in the same direction as $\grad f$.

Div's and Curl's disguises are much harder to remove. Indeed, not until after \hyperref[Episode 3]{Episode 3} are their true identities revealed\footnote{Particularly attentive viewers may see references to their identities earlier, but those early references, although helpful to plot development, distort the logical sequence of events.}. But we, the authors of this viewer's guide, do not hesitate to spoil the surprise. 

To describe their true identities: let $\vect{a} \in S$ and let $\gamma_n$ be a sequence of piecewise C$^1$ curves enclosing the point $\vect{a}$, oriented counter-clockwise and converging to $\vect{a}$ as $n \to \infty$. Let $A_n$ be the area enclosed by $\gamma_n$ and let $\vect{N}$ be the outward pointing unit normal vector to $\gamma_n$, as in Figure \ref{Converging Curves}. (The vector $\vect{N}$ depends on the points of $\gamma_n$ and is one of the two unit vectors orthogonal to the tangent vector to $\gamma_n$.)

\begin{figure}[ht]
\labellist \small\hair 2pt 
\pinlabel {$\gamma_1$} [tr] at 113 99
\pinlabel {$\gamma_2$} [tl] at 206 110
\pinlabel {$\gamma_3$} [tl] at 174 148
\pinlabel {$\stackrel{\vect{N}}{\rightarrow}$} [l] at 284 131
\endlabellist 
\centering 
\includegraphics[scale=0.45]{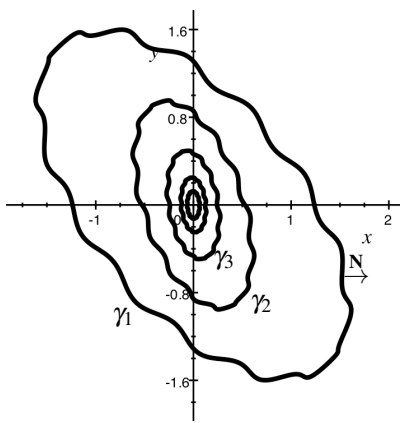}
\caption{A sequence of curves $\gamma_n$ converging to the origin. The vector $\vect{N}$ for one point on $\gamma_1$ is shown.}
\label{Converging Curves}
\end{figure}

The true identity of Div at $\vect{a}$ is then:
\begin{equation}\label{Div Revealed}
\diver \vect{F}(\vect{a}) = \lim_{n \to \infty} \frac{1}{A_n}\int_{\gamma_n} \vect{F}\cdot \vect{N} \thinspace ds,
\end{equation}
and the true identity of Curl is:
\begin{equation}\label{Curl Revealed}
\scurl \vect{F}(\vect{a}) = \lim_{n \to \infty} \frac{1}{A_n}\int_{\gamma_n} \vect{F} \vl.
\end{equation}
Succinctly, we say that $\diver \vect{F} (\vect{a})$ is the infinitessimal rate of expansion of $\vect{F}$ at $\vect{a}$ and that $\scurl \vect{F}(\vect{a})$ is the infinitessimal rate of circulation of $f$ at $\vect{a}$. Some well-meaning authors (e.g. \cite{Schey}) attempt to re-order the episodes of the original season and \emph{define} Div and Curl using Equations \eqref{Div Revealed} and \eqref{Curl Revealed} in place of Equations \eqref{Div Disguised} and \eqref{Disguised Curl}. There are, however, significant issues with this approach (namely: why does the limit exist and why is it independent of the sequence $(\gamma_n)$?) Our coarse actors will have no such issues.

Here are the classic episodes involving Grad and Curl. Episode 1 concerns Grad; Episode 3 concerns Curl; and Episode 2 concerns their relationship.

\begin{episode1}[Horsing Around]
Suppose that $\vect{F}$ is a C$^1$ vector field on a surface $S \subset \R^n$. Then there is a scalar field $f\co S \to \R$ such that $\vect{F} = \grad f$ if and only if $\int_{\phi} \vect{F} \vl = 0$ for every closed C$^1$ curve $\phi$ in $S$.
\end{episode1}

\begin{episode2}[Violent is the Word for Curl]
Suppose that $f\co S \to \R$ is a scalar field on a surface $S\subset\R^2$ such that all second partial derivatives exist and are continuous\footnote{i.e. $f$ is of class C$^2$}, then $\scurl(\grad f) = 0$.
\end{episode2}

\begin{episode3}[Grips, Grunts, and Green's\footnote{Traditionally known as Green's Theorem}]\label{Episode 3}
For any compact surface $S \subset \R^2$ and any C$^1$ vector field $\vect{F}$ on $S$,
\[
\int_{\boundary S} \vect{F} \vl = \iint_{S} \scurl \vect{F} \thinspace dA
\]
where $\boundary S$ is given the orientation so that $S$ is always on the left.
\end{episode3}

The \sout{proofs} plots of these classic episodes are complicated by Curl's disguise, the fact that C$^1$ curves can intersect infinitely many times,  and the reliance on unenlightening calculations.  Typically, (e.g. in \cites{Colley, MarsdenTromba}) Episode 3, for example, is proven only for regions that can be decomposed in a relatively nice way into finitely many so-called Type III regions. The proof for vector fields on Type III regions consists of two tedious calculations relying on the definition of line integral and Fubini's theorem. Colley \cite{Colley} references sources for more general proofs, but those proofs are rather difficult to follow in the context of Green's theorem. Apostol, in the first, but not second, edition of his book \cite[Theorem 10.43]{Apostol} provides the director's cut of Episode 3. His version relies on a rather difficult decomposition of the surface into finitely many Type I and Type II regions.  Our approach (via \hyperref[Whirl Theorem]{Episode 3'}) avoids Fubini's theorem and makes direct use of Riemann sums. It applies most directly to surfaces having boundary that is the union of finitely many vertical and horizontal line segments (so-called ``V/H surfaces''), but we also show how the Change of Variables theorem can be used to obtain a version of Episode 3 for a much wider class of surfaces.

\subsection{Cheap scenery}
Unlike the excellent scenery of the original episodes, the scenery provided for our coarse actors exhibits its structural elements. 

A \defn{combinatorial surface} $(S,G)$ (see Figure \ref{ExampleCombSurf} for an example) consists of a compact, smooth surface $S \subset \R^n$ and a graph embedded in $S$. The graph consists of finitely many vertices $\mc{V}$ (i.e. points in $S$); finitely many edges $\mc{E}$, each of which is an embedded C$^1$ curve with endpoints on vertices; and faces $\mc{F}$, each of which is the closure of a component of $S - (\bigcup \mc{V} \cup \bigcup \mc{E})$.  For simplicity, we require the following:
\begin{itemize}
\item each face in $\mc{F}$ is homeomorphic to a disc
\item each vertex is the endpoint of at least one edge in $\mc{E}$.
\item $\boundary S \subset \bigcup \mc{E}$.
\item Each edge not contained in $\boundary S$ lies in the boundary of two distinct faces.
\end{itemize}

The first condition ensures that each face is topologically simple. The second will ensure that our coarse actor stand-in for Grad is well-defined. The third allows us to relate the behaviour of combinatorial versions of scalar fields and vector fields on the interior of $S$ to their behaviour on the boundary. The final condition isn't strictly necessary, but it simplifies the exposition. Figure \ref{ExampleCombSurf} shows the surface $S$ from Figure \ref{Fig: Smooth Surface} with an embedded graph $G \subset S$ such that $(S,G)$ is a combinatorial surface. Figures \ref{Fig: Oriented Edge} and \ref{Fig: EdgePath} show examples of combinatorial surfaces in $\R^2$. (Ignore, for the moment, the arrows and labels in those figures.)

\begin{figure}[ht]\label{ExampleCombSurf}
\includegraphics[scale=.3]{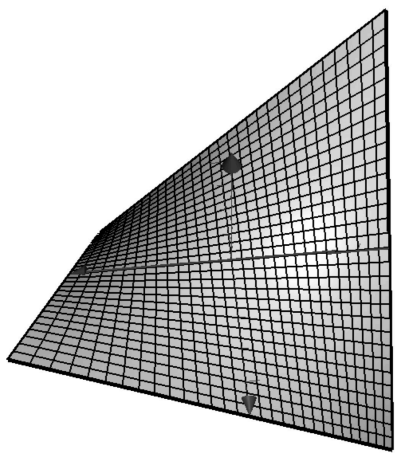}
\caption{An example of a combinatorial surface $(S,G)$. The graph $G$ includes the boundary of the surface. The vertices are the points where the curves intersect. The edges are the portions of the curves between the vertices, and the faces are the closures of the 2-dimensional pieces in the complement of the edges and vertices.}
\label{ExampleCombSurf}
\end{figure}

\subsubsection{Orientations} The notion of orientation plays an important role in topology, geometry, and vector calculus. In our combinatorial setting, we need to consider several types of orientation. Each is based on the notion of ``orienting'' an edge.

An \defn{orientation} of an edge in $\mc{E}$ is a choice of arrow pointing along the edge. If the orientation points into an endpoint $v \in \mc{V}$, then $v$ is called the \defn{sink} of the oriented edge and if the orientation points out of $v$, then $v$ is called the \defn{source} of the edge. Figure \ref{Fig: Oriented Edge} shows an oriented edge. If an edge forms a loop, then its endpoints coincide and $v$ is both the source and the sink of the edge.  

\begin{figure}[ht]
\labellist \small\hair 2pt 
\pinlabel {$v$} [br] at 139 374
\pinlabel {$w$} [l] at 296 197
\pinlabel {$e$} [bl] at 220 283
\endlabellist 
\centering 
\includegraphics[scale=.3]{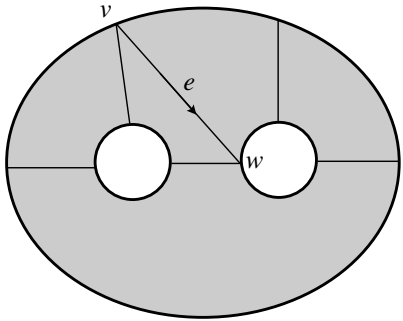}
\caption{The edge $e$ has been oriented so that $v$ is its source and $w$ is its sink.}
\label{Fig: Oriented Edge}
\end{figure}

An orientation of a face $\sigma$ is a choice of orientation on each edge of $\boundary \sigma$ so that the sink of each edge is the source for another edge in $\boundary \sigma$. The orientations on the edges of $\boundary \sigma$ are said to be \defn{induced} by the orientation of $\sigma$. The combinatorial surface $(S,G)$ is \defn{oriented} if each face of $G$ is given an orientation such that if faces $\sigma$ and $\tau$ coincide along an edge $e$ of $G$ then the orientations of $\sigma$ and $\tau$ induce opposite orientations on $e$. Figure \ref{Fig: OrientedSurface} shows an example of an oriented surface. If $(S,G)$ is oriented, then the orientations of the faces adjacent to the edges on $\boundary S$ induce an orientation on those edges. This is called the orientation of $\boundary S$ \defn{induced} by the orientation of $(S,G)$. Not every combinatorial surface can be given an orientation: see,  for example, the M\"obius band in Figure \ref{Fig: Moebius}. When our surface is in $\R^2$, we will always assume that it has been given the \defn{standard orientation}, where the boundary of every face is oriented so that the face is on the left when viewed from above.

\begin{figure}[ht]
\includegraphics[scale=.3]{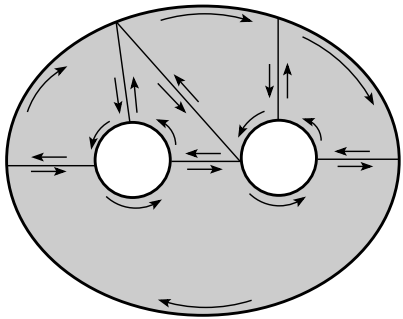}
\caption{Each face of the graph has been given an orientation. Since any two faces adjacent along an edge give opposite orientations to the edge, this is an orientation of the surface. Note that each component of the boundary of the surface inherits an induced orientation from the orientation on the faces.}
\label{Fig: OrientedSurface}
\end{figure}

\begin{figure}[ht]
\includegraphics[scale=.3]{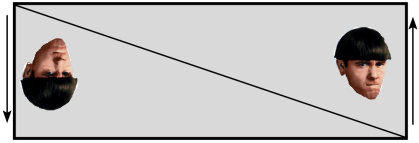}
\caption{The M\"obius (or is it Moe-bius?) band is the surface obtained by gluing the left edge of the rectangle to the right edge of the rectangle so that the arrows match. It is impossible to orient the two faces of the combinatorial surface so as to give an orientation of the M\"obius band.}
\label{Fig: Moebius}
\end{figure}

\subsubsection{Paths and Loops} A finite sequence of oriented edges $e_1, \hdots, e_n$ is called a \defn{edge path} if, for each $1 \leq i \leq n$, the sink vertex of $e_i$ is the source vertex of $e_{i+1}$. It is an \defn{edge loop} if the sink vertex for $e_n$ is the source vertex for $e_1$. An edge path is \defn{embedded} if, for all $i$, $e_i$ and $e_{i+1}$ are distinct edges in $G$ and if no vertex is the sink vertex for more than one edge in the sequence. Figure \ref{Fig: EdgePath} shows an embedded edge loop and a non-embedded edge path. 

\begin{figure}[ht]
\labellist \small\hair 2pt 
\pinlabel {$e_1$} [bl] at 137 279
\pinlabel {$e_2$} [b] at 206 176
\pinlabel {$e_3$} [l] at 239 205
\pinlabel {$e_4$} [br] at 206 244
\pinlabel {$e_5$} [br] at 142 160
\pinlabel {$e_6$} [b] at 93 155
\pinlabel {$v$} [br] at 115 311
\pinlabel {$w$} [br] at 65 144
\endlabellist 
\centering 
\includegraphics[scale=.4]{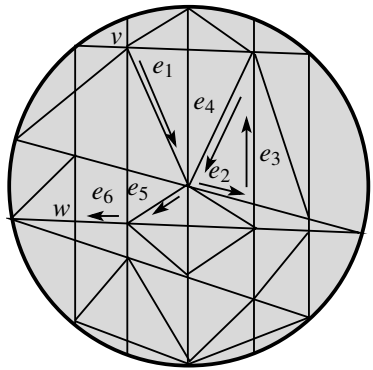}
\caption{The edges $e_1, e_2, \hdots, e_6$ are a non-embedded edge path from vertex $v$ to vertex $w$. The edges $e_2, e_3, e_4$ are an embedded edge loop.}
\label{Fig: EdgePath}
\end{figure}

\subsubsection{Fields}
The scenery in our bad productions is so flimsy that we actually need two different kinds of scalar fields. We call them, rather unimaginatively, ``vertex scalar fields'' and ``face scalar fields''. A \defn{vertex scalar field (VSF)} is a function $f \co \mc{V} \to \R$, while a \defn{face scalar field (FSF)} is a function $f\co \mc{F} \to \R$. We think of scalar fields as telling us the amount of something (perhaps cream pies?) on a vertex or face (Figures \ref{Fig: PieFight1} and \ref{Fig: VSFFSF}.)

\begin{figure}[tbh]
\centering
\includegraphics[scale=0.2]{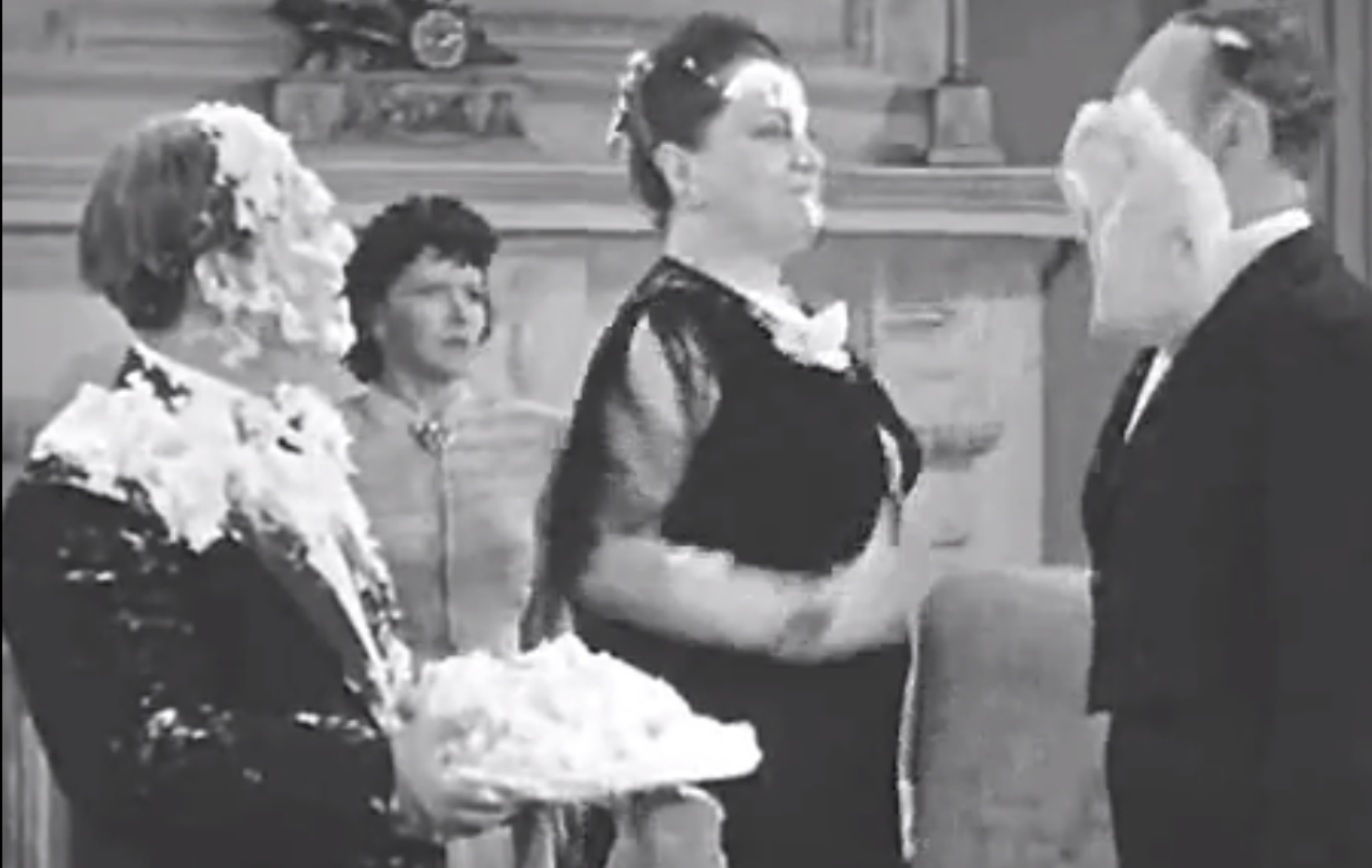}
\caption{A face scalar field will tell us the number of cream pies on each face.} \label{Fig: PieFight1}
\end{figure}

\begin{figure}[ht]
\labellist \small\hair 2pt 
\pinlabel {$-3$} [r] at 0 191
\pinlabel {$.5$} [br] at 140 371
\pinlabel {$7$} [bl] at 344 377
\pinlabel {$2$} [l] at 496 198
\pinlabel {$0$} [r] at 388 198
\pinlabel {$19$} [l] at 299 196
\pinlabel {$2$} [r] at 205 196
\pinlabel {$-1$} [l] at 115 191
\pinlabel {$5$} [t] at 158 242
\pinlabel {$4$} [t] at 344 245
\pinlabel {$\underline{\sqrt[3]{13}}$} at 251 91
\pinlabel {$\underline{e^\pi}$} at 82 260
\pinlabel {$\underline{10}$} at 198 258
\pinlabel {$\underline{777}$} at 410 266
\pinlabel {$\underline{-1}$} at 265 307
\endlabellist 
\centering 
\includegraphics[scale=.3]{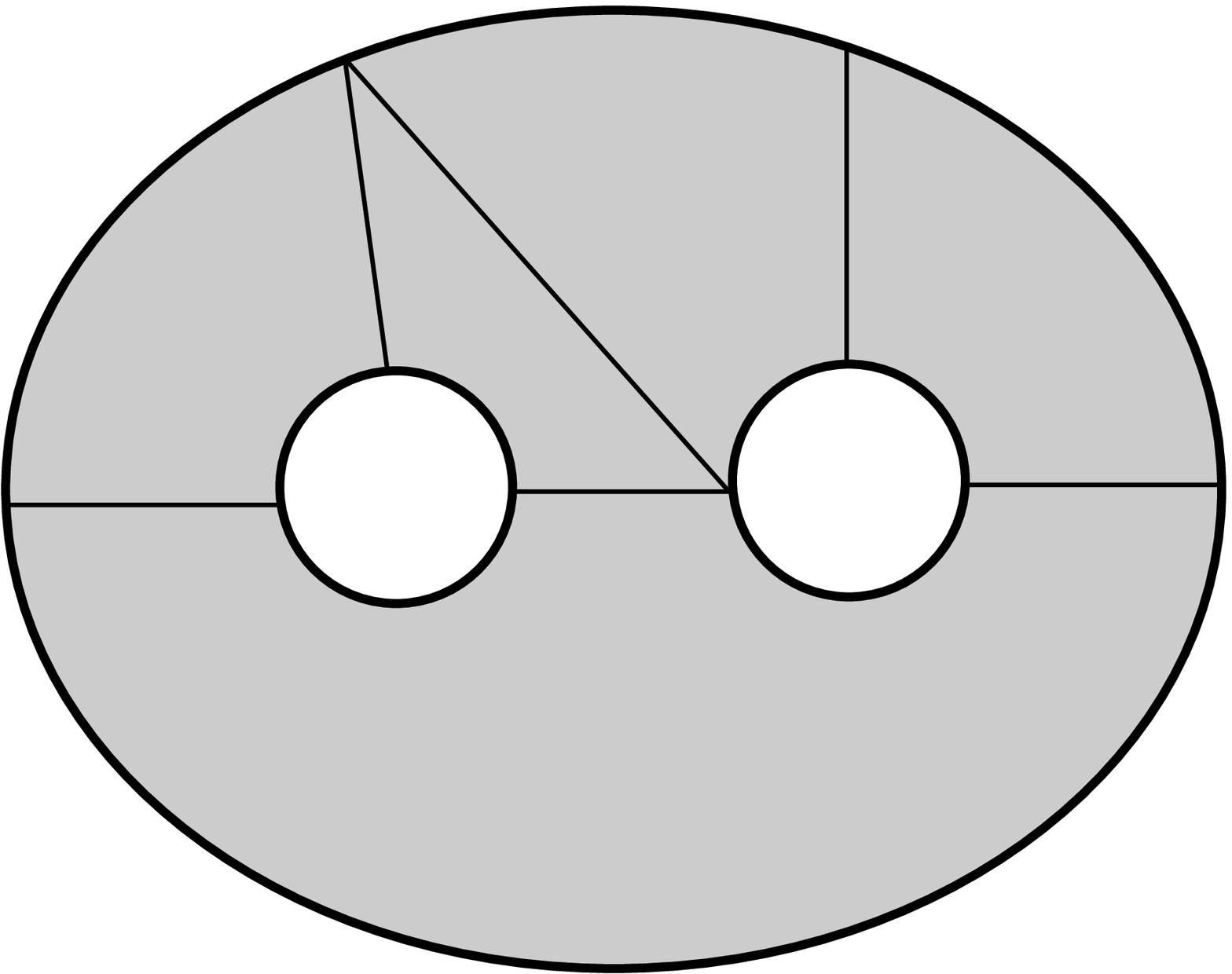}
\caption{The underlined numbers represent a FSF and the non-underlined numbers represent a VSF.}
\label{Fig: VSFFSF}
\end{figure}

Like vector fields on smooth surfaces, a \defn{combinatorial vector field} (CVF) gives both a direction and a rate of motion. In the case of a CVF, however, the motion is confined to the edges of $G$. More formally, a CVF on a combinatorial surface $(S,G)$ is defined to be a choice of orientation on each edge together with a function $\vect{F}\co \mc{E} \to \R$. Planting a cream pie firmly in notation's visage, we refer to \emph{both} the choice of orientation and the function as $\vect{F}$. We think of a CVF as telling us a direction and rate of movement. For example if Moe and Larry are standing on the endpoints of an edge, a CVF tells us whether the cream pie is flying from Moe to Larry or from Larry to Moe and what its rate of travel is.

For a moving object, negating the rate of motion has the same effect as reversing the direction of motion. Similarly, we say that two CVFs $\vect{F}$ and $\vect{G}$ are the \defn{same}, and we write $\vect{F} \same \vect{G}$, if whenever $\vect{F}$ and $\vect{G}$ assign the same orientation to an edge $e$, then $\vect{F}(e) = \vect{G}(e)$ and whenever $\vect{F}$ and $\vect{G}$ assign opposite orientations to $e$, then $\vect{F}(e) = - \vect{G}(e)$. The relation $\same$ is an equivalence relation on CVFs. 

\begin{figure}[ht]
\labellist \small\hair 2pt 
\pinlabel {$3.5$} [br] at 48 315
\pinlabel {$6.5$} [b] at 247 393
\pinlabel {$5$} [l] at 445 318
\pinlabel {$5$} [t] at 247 2
\pinlabel {$6$} [br] at 121 230
\pinlabel {$3$} [bl] at 200 230
\pinlabel {$3$} [t] at 160 147
\pinlabel {$15$} [br] at 305 230
\pinlabel {$4$} [bl] at 386 230
\pinlabel {$19$} [t] at 343 147
\pinlabel {$2$} [t] at 55 189
\pinlabel {$4.5$} [l] at 159 296
\pinlabel {$17$} [t] at 248 189
\pinlabel {$18.5$} [bl] at 211 296
\pinlabel {$3$} [r] at 345 296
\pinlabel {$2$} [t] at 440 189
\endlabellist 
\centering 
\includegraphics[scale=.4]{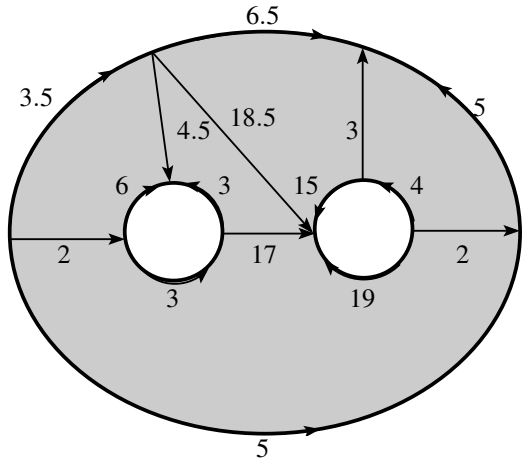}
\caption{A combinatorial vector field.}
\label{Fig: CVF}
\end{figure}

If we fix an orientation on each edge of $G$, a combinatorial vector field is \defn{canonical} if the orientation of each edge given by the CVF is the same as the given orientation. For a fixed orientation on each edge of $G$, there is, in each equivalence class of CVFs, exactly one canonical CVF. 

\subsubsection{Combinatorial Integrals}
In their first calculus course, students learn that integrals are approximated by certain sums. Likewise, the well-known integrals of vector calculus are approximated by certain sums. We denote these sums with the symbol $\cint$ to emphasize the analogy with integrals.

The \defn{integral} of a VSF $f$ over a multiset\footnote{A multiset is similar to a set, except that elements may appear more than once. We require all multisets to be finite.} $V$ of vertices of $G$ is defined to be 
\[
\cint_V f = \sum_{v \in V} f(v).
\]
If each vertex represents a table with cream pies stacked on it, the integral of a VSF over a set of tables is just the total number of cream pies stacked on all those tables.

Similarly, the \defn{integral} of an FSF $f$ over a multiset $\Phi$ of faces of $G$ is defined to be:
\[
\ciint_\Phi f = \sum_{F \in \Phi} f(F).
\]
The integral of a FSF tells us the total number of cream pies on the faces in $\Phi$.

If $e$ is an oriented edge and if $\vect{F}$ is a CVF, define $\epsilon(e,\vect{F})$ to be +1 if the given orientation of $e$ and the orientation of $e$ by $\vect{F}$ are the same and define it to be $-1$ if they differ. If $E$ is a multiset of edges, define the \defn{integral} of $\vect{F}$ over $E$ by
\[
\cint_E \vect{F} = \sum_{e \in E} \epsilon(e,\vect{F})\vect{F}(e).
\]
Notice that if $\vect{F} \same \vect{G}$, then $\cint_E \vect{F} = \cint_E \vect{G}$ since if $\vect{F}$ and $\vect{G}$ differ on an edge $e$, then they assign $e$ opposite orientations and take the same value with opposite signs on $e$. Ignoring the distinction between sequences and multisets, we can consider the integral of a CVF over a path. It measures the total velocity of cream pies flying over the path. The integrals of $\vect{F}$ over a path $e_1, \hdots, e_n$ and over its reversal $e_n, \hdots, e_1$ are equal in absolute value, but opposite in sign.

\subsection{The coarse actors}

With appallingly bad taste, we now replace the original 3 stooges Grad, Div, and Curl with 3 poor substitutes: Tilt, Ebb, and Whirl. They only approximate the originals, but as we shall see in Section \ref{Section: Refining}, with some refinement they become better.

\subsubsection{Tilt} Tilt is Grad's understudy. Unlike the original, Tilt cannot act on all scalar fields, only on vertex scalar fields. If $f$ is a VSF on the combinatorial surface $(S,G)$, we define the CVF $\tilt f$ as follows. If $e$ is an edge of $G$ with endpoints $v$ and $w$ such that $f(v) \leq f(w)$, let $\tilt f$ give $e$ the orientation that points from $v$ to $w$ and define $\tilt f(e) = f(w) - f(v)$. Notice that $\tilt f (e)$ is well-defined unless $f(v) = f(w)$, in which case there are two possible orientations of $e$, but with either orientation we still have $\tilt f(e) = 0$. If $f$ tells us the number of cream pies at each vertex, then $\tilt f$ is a CVF that sends cream pies along edges from vertices with fewer cream pies to vertices with more cream pies such that the larger the difference between the number of cream pies at the endpoints of an edge, the faster the cream pies move. The CVF in Figure \ref{Fig: CVF} is the tilt of the VSF in Figure \ref{Fig: VSFFSF}.

\begin{challenge}
Audition candidates for a Grad-impersonator who can act on face scalar fields.
\end{challenge}

\subsubsection{Ebb} Ebb is our poor substitute for  divergence. It converts a CVF into VSF. Let $\vect{F}$ be a CVF on the combinatorial surface $(S,G)$. For a vertex $v \in \mc{V}$, let $E_+(v,\vect{F})$ be the set of edges with orientation given by $\vect{F}$ pointing out of $v$ and let $E_-(v,\vect{F})$ be the set of edges with orientation given by $\vect{F}$ pointing into $v$. Define the \defn{ebb} of $\vect{F}$ at a vertex $v$ to be:
\[
\ebb \vect{F}(v) = \sum_{e \in E_+(v,\vect{F})} \vect{F}(e) - \sum_{e \in E_-(v,\vect{F})} \vect{F}(e).
\]
Informally, $\ebb \vect{F}$ measures the net flow of cream pies out of a vertex. The ebb of the CVF from Figure \ref{Fig: CVF} is depicted in Figure \ref{Fig: Ebb Fig}. 
\begin{figure}[ht]
\labellist \small\hair 2pt 
\pinlabel {$10.5$} [r] at 0 191
\pinlabel {$26$} [br] at 140 371
\pinlabel {$-14.5$} [bl] at 344 377
\pinlabel {$-2$} [l] at 496 198
\pinlabel {$25$} [bl] at 395 203
\pinlabel {$-69.5$} [l] at 299 196
\pinlabel {$17$} [r] at 205 196
\pinlabel {$7$} [l] at 115 191
\pinlabel {$-13.5$} [t] at 158 242
\pinlabel {$14$} [t] at 344 245
\endlabellist 
\centering 
\includegraphics[scale=.45]{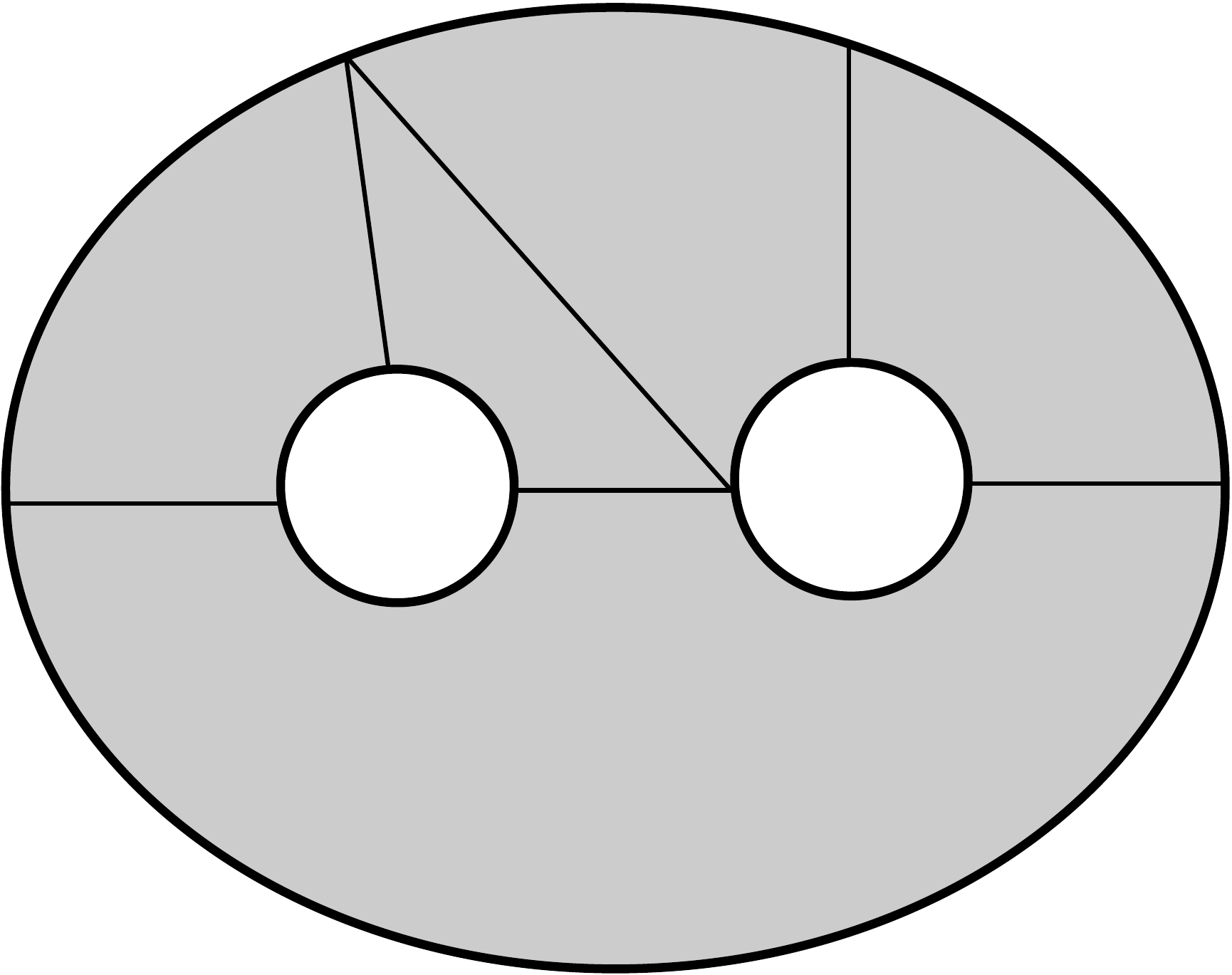}
\caption{The VSF pictured is the ebb of the CVF in Figure \ref{Fig: CVF}.}
\label{Fig: Ebb Fig}
\end{figure}

\begin{challenge}
Audition candidates for a Div-impersonator that produces a FSF.
\end{challenge}

\subsubsection{Whirl} Whirl is our mock scalar Curl. It converts a CVF into an FSF and measures the circulation of cream pies around the boundary of each oriented face. Let $\vect{F}$ be a CVF on $S$ and let $\sigma$ be a face with an orientation. As usual, we give the edges in $\boundary \sigma$ the orientation induced by the orientation of $\sigma$. Define
\[
\whirl \vect{F}(\sigma) = \cint_{\boundary \sigma} \vect{F}.
\]
For example, the whirl of the CVF in Figure \ref{Fig: CVF} assigns 0 to every face. Notice that if the orientation of $\sigma$ is reversed, then $\whirl \vect{F}$ changes sign. Also note that if $\vect{F} \same \vect{G}$ then $\whirl \vect{F} = \whirl \vect{G}$. If $(S,G)$ is an oriented surface, we assume that Whirl is defined using the orientation on each face of $G$ induced by the orientation of $(S,G)$. 

\subsection{Bad knockoffs of the original episodes.}

Each of our versions is acted on a combinatorial surface $(S,G)$. 

\begin{episode1a}\label{Episode 1A}
The following are equivalent for a CVF $\vect{F}$:
\begin{enumerate}
\item\label{embedded loop} For all \emph{embedded} edge loops $\phi$, $\cint_{\phi} \vect{F} = 0$.
\item\label{any loop} For \emph{all} edge loops $\phi$, $\cint_{\phi} \vect{F} = 0$.
\item\label{two paths} If $\phi$ and $\psi$ are two edge paths each joining a vertex $v$ to a vertex $w$ then $\cint_\phi \vect{F} = \cint_{\psi} \vect{G}$.
\item\label{making tilt} There is a VSF $f$ such that $\vect{F} \same \tilt f$.
\end{enumerate}
\end{episode1a}

\begin{proof}
Scene 1 of Episode 1' presents the obvious fact that \eqref{any loop} $\Rightarrow$ \eqref{embedded loop}. 

Scene 2, which shows that \eqref{embedded loop} $\Rightarrow$ \eqref{any loop}, is more subtle and shows the advantage of edge paths over merely piecewise C$^1$ paths. Assume, for a contradiction, that \eqref{embedded loop} holds but that \eqref{any loop} does not. Let 
\[\phi = e_1, \hdots, e_n \] 
be an edge loop in $(S,G)$ with the property that $\cint_\phi \vect{F} \neq 0$. There may be many such edge loops and they may contain different numbers of edges. We choose $\phi$ to be one which minimizes $n$. Let $v_i$ be the sink vertex for $e_i$. Since (2) holds, the edge loop $\phi$ is not embedded. Thus, either there are adjacent edges $e_j$ and $e_{j+1}$ in $\phi$ that are the same edge in $G$, or there are $j \neq k$ such that $v_j = v_k$. If the former happens, we can delete $e_j$ and $e_{j+1}$ from $\phi$ to obtain $\phi'$. Since $e_j$ and $e_{j+1}$ have opposite orientations in $\phi$, we have $\cint_{\phi'} \vect{F} = \cint_{\phi} \vect{F}$. But this contradicts the choice of $\phi$ to be a path of shortest length contradicting (3). Thus, there are vertices $v_j = v_k$ with $j \neq k$. We may choose $j$ and $k$ so that $j < k$ and so that $k - j$ is as small as possible. The edge path $\psi = e_{j+1}, \hdots, e_k$ is then an embedded edge loop in $G$ and so by \eqref{embedded loop}, $\cint_{\psi} \vect{F} = 0$. The path $\phi'= e_1, \hdots, e_j, e_{k+1}, \hdots, e_n$ is an edge loop with $\cint_{\phi'} \vect{F} = \cint_{\phi} \vect{F}$, but $\phi'$ is shorter than $\phi$ and so we have contradicted our choice of $\phi$. Figure \ref{Path Shortening} shows an example. Hence, \eqref{embedded loop} $\Rightarrow$ \eqref{any loop}.

\begin{figure}[ht]
\labellist \small\hair 2pt 
\pinlabel {$e_j$} [bl] at 95 293
\pinlabel {$v_j$} [tr] at 167 228
\pinlabel {$e_{j+1}$} [tr] at 283 185
\pinlabel {$e_k$} [bl] at 323 262
\pinlabel {$v_k$} [b] at 174 239
\pinlabel {$e_{k+1}$} [r] at 170 174
\endlabellist 
\centering 
\includegraphics[scale=.4]{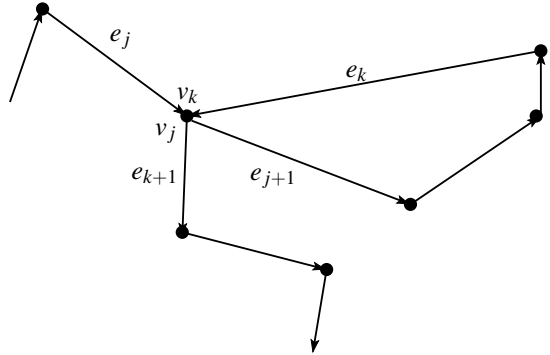}
\caption{The path $e_{j+1}, \hdots, e_k$ is an embedded edge loop and the path $\phi'= e_1, \hdots, e_j, e_{k+1}, \hdots, e_n$ is shorter than $\phi$}
\label{Path Shortening}
\end{figure}

Scene 3 happens very quickly. It shows that \eqref{any loop} $\Rightarrow$  \eqref{two paths}. Assume that \eqref{any loop} holds. Let $\phi = e_1, \hdots, e_n$ and $\psi = a_1, \hdots, a_m$ be two edge paths joining a vertex $v$ to a vertex $w$. Then the edge path
\[
\phi \ob{\psi}= e_1, \hdots, e_n, a_m, a_{m-1}, \hdots, a_1
\]
is an edge loop. Thus,
\[
0 = \cint_{\phi\ob{\psi}} \vect{F} =  \cint_{\phi} \vect{F} -\cint_{\psi} \vect{F},
\]
which implies \eqref{two paths}.

Scene 4 shows that \eqref{two paths} $\Rightarrow$ \eqref{any loop}; it unfolds even more quickly than Scene 3. Assume (4) and let $\phi$ be an edge loop in $G$ with $v$ the source vertex for the first edge in $\phi$. Let $e$ be an oriented edge having $v$ as its source and let $\ob{e}$ be the same edge but with the opposite orientation. Let $\psi = e, \ob{e}$. Clearly, $\cint_{\psi} \vect{F} = 0$. Since $\psi$ and $\phi$ both join $v$ to $v$, by \eqref{two paths} we have $\cint_{\phi}\vect{F} = 0$.

In Scene 5, the drama picks up a little. It shows that \eqref{making tilt} $\Rightarrow$ \eqref{any loop}. Recall that if $\vect{F} \same \vect{G}$ then for any edge path $\phi$, $\cint_{\phi} \vect{F} = \cint_{\phi} \vect{G}$. We may, therefore, assume that there is a VSF $f$ such that $\vect{F} = \tilt f$. Let $\phi = e_1, \hdots, e_n$ be an edge loop and let $v_i$ be the source vertex of $e_i$. Define $\epsilon_i = \epsilon(e_i, \tilt f)$.  By definition of the combinatorial integral, 
\[
\cint_\phi \tilt f = \epsilon_1\tilt f(e_1) + \epsilon_2\tilt f(e_2) + \hdots + \epsilon_n\tilt f(e_n).
\]

We observe that, for all $i$, 
\begin{equation}\label{epsilon id}
\epsilon_i\tilt f(e_i) = f(v_{i+1}) - f(v_i).
\end{equation}
To see this, recall that if $f(v_i) < f(v_{i+1})$ then $\tilt f$ gives $e_{i}$ the same orientation as that given by $\phi$ and $\tilt f(e_{i+1}) = f(v_{i+1}) - f(v_i)$.  If, on the other hand, $f(v_i) > f(v_{i+1})$ then $\tilt f$ gives $e_{i+1}$ the opposite orientation as that given by $\phi$ and $\tilt f(e_{i+1}) = f(v_i) - f(v_{i+1})$. If $f(v_i) = f(v_{i+1})$, then $\tilt f(e_{i+1}) = 0$. Thus,  in all cases, \eqref{epsilon id} holds. See Figure \ref{CombPath2}. Consequently,
\[\cint_\phi \tilt f = (f_{v_2} - f(v_1)) + \hdots + (f(v_{n-1}) - f(v_{n-2})) + (f(v_1) - f(v_{n-1})),
\]
which is clearly 0, as desired.

\begin{figure}[ht]
\labellist \small\hair 2pt 
\pinlabel {$v_1$} [t] at 143 85 
\pinlabel {$v_2$} [t] at 362 0 
\pinlabel {$v_3$} [l] at 490 92 
\pinlabel {$v_4$} [r] at 319 100
\pinlabel {$v_5$} [l] at 449 233
\pinlabel {$v_6$} [b] at 334 282 
\pinlabel {$v_7$} [t] at 222 157
\pinlabel {$v_8$} [b] at 186 279 
\pinlabel {$v_9$} [r] at 1 196
\pinlabel {$f(v_2) - f(v_1)$} [tr] at 290 32
\pinlabel {$f(v_3) - f(v_2)$} [tl] at 418 42
\pinlabel {$f(v_4) - f(v_3)$} [bl] at 367 101
\pinlabel {$f(v_5) - f(v_4)$} [l] at 392 173
\pinlabel {$f(v_6) - f(v_5)$} [bl] at 381 257
\pinlabel {$f(v_7) - f(v_6)$} [l] at 273 212
\pinlabel {$f(v_8) - f(v_7)$} [r] at 212 187
\pinlabel {$f(v_9) - f(v_8)$} [br] at 88 234
\pinlabel {$f(v_1) - f(v_9)$} [tr] at 71 145
\endlabellist 
\centering 
\includegraphics[scale=.45]{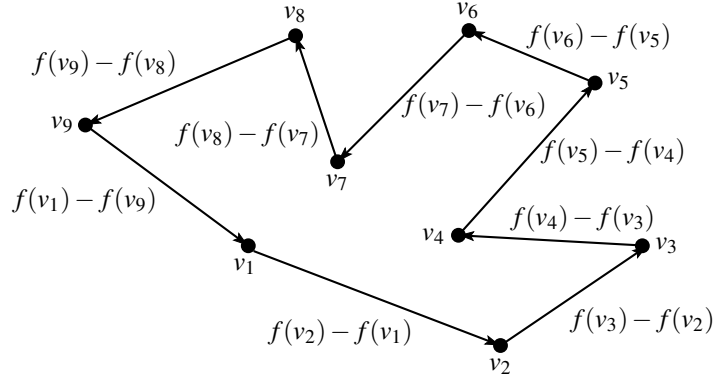}
\caption{The edge loop $\phi$. Each edge $e_i$ is labelled with $\epsilon_i \tilt f(e_i)$. Observe that all the labels on the edges sum to 0.}
\label{CombPath2}
\end{figure}

Scene 6, which shows that \eqref{two paths} $\Rightarrow$ \eqref{making tilt}, is where the pies really fly. The curtains part, revealing a CVF $\vect{F}$ satisfying \eqref{two paths}.  We wish to construct a VSF $f$ so that $\vect{F} \same \tilt f$. As in the classic version, we do this by integrating over paths. Without loss of generality, assume that $S$ is connected (if not, do the following in each component of $S$.) As in the classic version of the episode, all depends on choosing a home vertex $a$, on which the definition of $f$ is based. It matters not which vertex is chosen to be home, but different choices give different VSFs.

For each vertex $x \in \mc{V}$, we choose an edge path $\phi_x$ that begins at $a$ and ends at $x$ and define $f(x) = \cint_{\phi_x} \vect{F}$.  At first appearance, it seems that $f$ depends not only on $a$ but also on the chosen paths $\phi_x$. However, by \eqref{two paths}, the VSF depends only on $a$. To see that $\tilt f \same \vect{F}$, consider an edge $e$ with endpoints $v$ and $w$. Assume that $\vect{F}$ orients $e$ from $v$ to $w$. Let $\phi_v$ be a path from $a$ to $v$ and let $\phi_w$ be the path $\phi_v$ followed by the edge $e$, oriented from $v$ to $w$, as in Figure \ref{HomeVertex}.

\begin{figure}[ht]
\labellist \small\hair 2pt 
\pinlabel {$a$} [l] at 12 7
\pinlabel {$v$} [t] at 254 89
\pinlabel {$w$} [t] at 338 89
\pinlabel {$e$} [b] at 296 98
\endlabellist 
\centering 
\includegraphics[scale=.4]{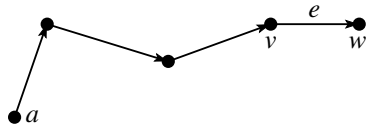}
\caption{The path $\phi_v$ runs from $a$ to $v$ and the path $\phi_w$ follows $\phi_v$ and then the edge $e$.}
\label{HomeVertex}
\end{figure}

Then, $\phi_w$ and $\vect{F}$ orient $e$ in the same direction and so 
\begin{equation}\label{tilted}
\cint_{\phi_w} \vect{F} - \cint_{\phi_v} \vect{F} = \vect{F}(e).
\end{equation}
If $\vect{F}(e) > 0$, then $\tilt f$ orients $e$ from $v$ to $w$, as does $\vect{F}$. In this case, by Equation \eqref{tilted}, $\tilt f$ and $\vect{F}$ give the same orientation and value to $e$. If, on the other hand, $\vect{F}(e) < 0$, then $\tilt f$ and $\vect{F}$ give opposite orientations and the values differ only in sign. With a final poke in the eye, we observe that if $\vect{F}(e) = 0$, then we also have $\tilt f (e) = 0$ and so $\tilt f \same \vect{F}$. \emph{Nyuk, nyuk, nyuk!}
\end{proof}

By the definition of whirl, we see immediately, without any unpleasant calculation, that if whirl clonks tilt, then nothing happens:

\begin{episode2a}[The TiltaWhirl Theorem]\label{Episode 2A}
If $f$ is a VSF on $(S,G)$ then $\whirl \tilt f = 0$.
\end{episode2a}

The next episode is our hack adaptation of Green's Theorem (\hyperref[Episode 3]{Episode 3}):

\begin{episode3a}[The Whirl Theorem]\label{Whirl Theorem}
If $\vect{F}$ is a CVF on an oriented combinatorial surface $(S,G)$, then 
\[
\cint_{\boundary S} \vect{F} = \cint_{S} \whirl \vect{F}.
\]
\end{episode3a}

\begin{proof}
Our coarse actors really shine in this episode, for our result doesn't rely on any unilluminating calculations, obscure definitions, or subtle properties of C$^1$ curves. The episode opens with a CVF $\vect{F}$ on an oriented combinatorial surface $(S,G)$. In saunters our hero, Whirl. By definition, for $\sigma \in \mc{F}$,
\[
\whirl \vect{F} (\sigma) = \sum_{e \subset \boundary \sigma} \epsilon(e,\vect{F})\vect{F}(e),
\]
where the sum is over all edges $e$ in the boundary of $\sigma$. Each of those edges has an orientation induced by the orientation of $S$. By the definition of $G$, in the sum
\begin{equation}\label{whirl expansion}
\cint_\mc{F} \whirl \vect{F} = \sum_{\sigma \in \mc{F}} \whirl \vect{F}(\sigma) = \sum_{\sigma \in \mc{F}} \sum_{e \subset \boundary \sigma} \epsilon(e,\vect{F}) \vect{F}(e)
\end{equation}
each edge $e \in \mc{E}$ appears once or twice. It appears once exactly when $e \subset \boundary S$ and it appears twice when $e \not\subset \boundary S$. If an edge $e$ appears twice, it is shared by the boundaries of distinct faces $\sigma$ and $\tau$. Since $(S,G)$ is oriented, if two faces $\sigma$ and $\tau$ are adjacent along an edge $e$, they induce opposite orientations on $e$.  Thus in Equation \eqref{whirl expansion}, all the terms cancel except for those coming from the edges in $\boundary S$. Thus, the final term in Equation \eqref{whirl expansion} is equal to $\sum_{e \subset \boundary S} \epsilon(e, \vect{F})\vect{F}(e).$ But this is exactly the definition of $\cint_{\boundary S} \vect{F}$. \emph{Nyuk, nyuk, nyuk!}
\end{proof}

\section{Refining the stand-in stooges}\label{Section: Refining}
Having appreciated the coarse acting by our stooge-wannabes Tilt, Ebb, and Whirl, we might now hope that we can improve our episodes by improving the scenery in them. Indeed we will show that if we refine our scenery enough, then Whirl and Curl are indistinguishable. Furthermore, Theorem \ref{Green's for V/H} shows that \hyperref[Episode 3]{Episode 3} and \hyperref[Whirl Theorem]{Episode 3'} also become indistinguishable.

\begin{challenge}
After reading this section, show that Ebb and Div are indistinguishable, in the same sense that Whirl and Curl are. What is the relationship between Tilt and Grad?
\end{challenge}

Throughout this section, let $S \subset \R^2$ be a compact surface with piecewise C$^1$ boundary. Let $\vect{F}$ be a C$^1$ vector field on $S$. Suppose that $G \subset S$ is a graph so that $(S,G)$ is a combinatorial surface. We begin by constructing a CVF $\vect{F}_G$ induced by $\vect{F}$. For an edge $e \in \mc{E}$, we let $\vect{F}_G$ give $e$ an orientation so that $\vect{F}_G(e) = \int_e \vect{F}\vl$ is non-negative.  

We observe:
\begin{lemma}\label{CVF VF equiv}
Suppose that $e$ is an oriented edge of $G$. Then $\cint_e \vect{F}_G = \int_e \vect{F}\vl$.
\end{lemma}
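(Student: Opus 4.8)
The plan is to unwind both sides of the claimed equality directly from their definitions, so the lemma becomes essentially a tautology once the orientation bookkeeping is made explicit. Recall that $\vect{F}_G$ is constructed precisely so that, for each edge $e$, it assigns the orientation making $\int_e \vect{F}\vl \ge 0$, and then sets $\vect{F}_G(e) = \int_e \vect{F}\vl$ with respect to that orientation. The statement to prove concerns an edge $e$ equipped with an arbitrary orientation (not necessarily the one chosen by $\vect{F}_G$), so the only content is to check that the sign factor $\epsilon(e,\vect{F}_G)$ exactly compensates for any discrepancy between the given orientation and the one $\vect{F}_G$ prefers.

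First I would recall the definition of the combinatorial integral over a single edge: $\cint_e \vect{F}_G = \epsilon(e,\vect{F}_G)\,\vect{F}_G(e)$, where $\epsilon(e,\vect{F}_G)$ is $+1$ if the given orientation of $e$ agrees with the one assigned by $\vect{F}_G$ and $-1$ otherwise. I would then split into two cases. If the given orientation of $e$ agrees with the orientation chosen by $\vect{F}_G$, then $\epsilon(e,\vect{F}_G) = +1$ and $\vect{F}_G(e) = \int_e \vect{F}\vl$ computed with this same orientation, so $\cint_e \vect{F}_G = \int_e \vect{F}\vl$ immediately. If the given orientation is the reverse of the one chosen by $\vect{F}_G$, then $\epsilon(e,\vect{F}_G) = -1$; meanwhile, by the standard fact (recalled in the section on integrals) that reversing the orientation of a path negates the line integral of a vector field, the value $\int_e \vect{F}\vl$ computed with the given orientation equals $-\int_e \vect{F}\vl$ computed with the $\vect{F}_G$-orientation, i.e. $-\vect{F}_G(e)$. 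Hence $\cint_e \vect{F}_G = (-1)\vect{F}_G(e) = \int_e \vect{F}\vl$ with respect to the given orientation, as desired.

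There is no real obstacle here; the only thing requiring a moment's care is making sure the symbol $\int_e \vect{F}\vl$ is interpreted consistently — it must be read as the line integral with respect to whatever orientation $e$ currently carries, so that both sides of the claimed identity refer to the same orientation of $e$. Once that convention is pinned down, the two-case check above is complete, and the lemma follows. (One could even phrase the whole argument in one line: both $\cint_e \vect{F}_G$ and $\int_e \vect{F}\vl$ change sign under reversal of the orientation of $e$ and agree when $e$ carries the orientation chosen by $\vect{F}_G$, so they agree for every orientation of $e$.)
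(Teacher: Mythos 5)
Your proof is correct and amounts to the same definition-unwinding argument as the paper's: both hinge on the fact that each side flips sign under orientation reversal and that the two sides agree when $e$ carries the orientation chosen by $\vect{F}_G$. The paper merely packages this by passing to a CVF $\same$-equivalent to $\vect{F}_G$ that carries the given orientation on $e$ and invoking the invariance of $\cint$ under $\same$, whereas you carry out the two-case sign check explicitly.
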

\begin{proof}
Let $\vect{G}$ be a CVF such that $\vect{G} \same \vect{F}$ and so that $\vect{G}$ gives $e$ the same orientation as the given orientation of $e$. Then $\cint_e \vect{G} = \int_e \vect{F}\vl$. Since $\cint_e \vect{F}_G = \cint_e \vect{G}$, we have our result.
\end{proof}

\subsection{Whirl and Curl}

The fundamental relationship between Whirl and Curl arises from applications of the Mean Value Theorems for Integrals and Derivatives. 

\begin{theorem}[MVT for vector fields on rectangles]\label{MVTR}
Suppose that $\vect{F} = \begin{pmatrix}M \\ N \end{pmatrix}$ is a differentiable vector field defined on a solid rectangle $R \subset \R^2$ of positive area with sides parallel to the $x$ and $y$ axes.  Then there exist points $\vect{x}, \vect{y} \in R$ such that
\[
\frac{1}{\area(R)}\int_{\boundary R} \vect{F} \cdot d\vect{s} = \llpartial{N}{y}(\vect{y}) - \llpartial{M}{x}(\vect{x}).
\]
\end{theorem}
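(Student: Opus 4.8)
The plan is to turn the boundary integral into iterated one-variable integrals and then extract point values with the \emph{one-dimensional} Mean Value Theorems, which keeps everything inside the stated hypothesis that $\vect{F}$ is merely differentiable. Write $R=[a,b]\times[c,d]$ with $a<b$ and $c<d$, and give $\boundary R$ its counterclockwise orientation. Splitting $\boundary R$ into its four sides and using the definition \eqref{Defn vl} of the line integral, the two horizontal sides see only the $M$-component and the two vertical sides only the $N$-component, so
\[
\int_{\boundary R}\vect{F}\vl = \int_c^d\big(N(b,y)-N(a,y)\big)\,dy - \int_a^b\big(M(x,d)-M(x,c)\big)\,dx.
\]

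For the first integral, I would note that $y\mapsto N(b,y)-N(a,y)$ is continuous (since $N$ is differentiable) and apply the Mean Value Theorem for integrals to obtain $y_0\in[c,d]$ with
\[
\int_c^d\big(N(b,y)-N(a,y)\big)\,dy = \big(N(b,y_0)-N(a,y_0)\big)(d-c).
\]
I would then apply the one-variable Mean Value Theorem for derivatives to $x\mapsto N(x,y_0)$, obtaining $\xi\in[a,b]$ with $N(b,y_0)-N(a,y_0)=\llpartial{N}{x}(\xi,y_0)(b-a)$; setting $\vect{y}=(\xi,y_0)$ and using $\area(R)=(b-a)(d-c)$, the first integral equals $\llpartial{N}{x}(\vect{y})\,\area(R)$. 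Running the same two steps on $x\mapsto M(x,d)-M(x,c)$ and then on $y\mapsto M(x_0,y)$ produces $\vect{x}\in R$ with the second integral equal to $\llpartial{M}{y}(\vect{x})\,\area(R)$. Dividing by $\area(R)$ gives
\[
\frac{1}{\area(R)}\int_{\boundary R}\vect{F}\vl = \llpartial{N}{x}(\vect{y}) - \llpartial{M}{y}(\vect{x}),
\]
which is precisely the scalar curl $\scurl\vect{F}$ sampled at points of $R$ and is the form needed to identify Whirl with Curl.

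The principal obstacle is one of hypotheses rather than difficulty: because $\vect{F}$ is only differentiable, the partials $\llpartial{N}{x}$ and $\llpartial{M}{y}$ need not be continuous, so I would deliberately \emph{avoid} first rewriting the circulation as $\iint_R\scurl\vect{F}\,dA$ and then invoking a two-dimensional mean value theorem for integrals, a route that silently requires continuity of the second partials. Routing instead through the one-variable Mean Value Theorem for integrals (which needs only continuity in a single variable) and the one-variable Mean Value Theorem for derivatives (which needs only differentiability) keeps the argument honest. I should also flag that the displayed conclusion must read $\llpartial{N}{x}(\vect{y})-\llpartial{M}{y}(\vect{x})$: the printed combination $\llpartial{N}{y}(\vect{y})-\llpartial{M}{x}(\vect{x})$ cannot equal the circulation in general --- for example $M=y$, $N=0$ on $[0,1]^2$ has circulation $-1$ while $\llpartial{N}{y}$ and $\llpartial{M}{x}$ vanish identically --- so the $x$ and $y$ subscripts in the statement appear transposed, and it is the scalar-curl pairing that the proof actually delivers.
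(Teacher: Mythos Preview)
Your proof is correct and follows essentially the same route as the paper: split the circulation into the $M$- and $N$-contributions on opposite sides, apply the one-variable Mean Value Theorem for integrals to each, then the Mean Value Theorem for derivatives, arriving at $\llpartial{N}{x}(\vect{y})-\llpartial{M}{y}(\vect{x})$. You are also right that the displayed conclusion has its subscripts transposed---the paper's own proof in fact produces the scalar-curl combination $\llpartial{N}{x}(\vect{y})-\llpartial{M}{y}(\vect{x})$, not the printed $\llpartial{N}{y}(\vect{y})-\llpartial{M}{x}(\vect{x})$.
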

\begin{proof}
Suppose that $R = [a,b] \times [c,d]$. Parameterize the top and bottom sides of $\boundary R$ as $(t,c)$ and $(t,d)$ for $a \leq t \leq b$, respectively. Parameterize the left and right sides of $\boundary R$ as $(a, t)$ and $(b, t)$ for $c \leq t \leq d$ respectively. Note that our parameterizations of the top and left sides of $\boundary R$ have the opposite orientations from that induced by $\boundary R$. We have (by Definition \eqref{Defn vl})
\begin{equation}\label{Eqn MVTI}
\frac{1}{\area(R)} \int_{\boundary R} \vect{F} \cdot d\vect{s} = \frac{-1}{b-a}\int_a^b \frac{M(t,d) - M(t,c)}{d-c}\thinspace dt + \frac{1}{d-c}\int_c^d\frac{N(b,t) - N(a,t)}{b-a} \thinspace dt 
\end{equation}

Since $M$ and $N$ are continuous, the integrands are continuous. By the Mean Value Theorem for Integrals, there exists $(x_0,y_0) \in R$ so that the right side of  Equation \eqref{Eqn MVTI} equals
\begin{equation}\label{post MVTI}
-\frac{M(x_0,d) - M(x_0,c)}{d-c} + \frac{N(b,y_0) - N(a,y_0)}{b-a}.
\end{equation}
Since the functions $M(x_0,\cdot)$ and $N(\cdot, y_0)$ are differentiable on the intervals $[c,d]$ and $[a,b]$ respectively, by the Mean Value Theorem for Derivatives, there exists $(x_1,y_1) \in R$ so that Expression \eqref{post MVTI} equals
\[
-\llpartial{M}{y}(x_0,y_1) + \llpartial{N}{x}(x_1,y_0).
\]
Letting $\vect{x} = (x_0,y_1)$ and $\vect{y} = (x_1,y_0)$, our show comes to its rousing conclusion.
\end{proof}

As an immediate corollary, observe:
\begin{corollary}\label{Circulation and Curl}
Let $R$ be a solid rectangle with sides parallel to the $x$ and $y$ axes. Let $G$ be the graph in $R$ having vertices only at the corners of $\boundary R$. Let $\vect{F} = \begin{pmatrix} M \\ N \end{pmatrix}$ be a differentiable vector field on $R$ such that either $M = 0$ or $N = 0$. Then there exists $\vect{x} \in R$ such that
\[
\frac{1}{\area(R)} \whirl \vect{F}_G (R) = \scurl \vect{F}(\vect{x}).
\]
\end{corollary}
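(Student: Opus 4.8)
The plan is to chain together the two results already set up: Lemma~\ref{CVF VF equiv}, which trades the combinatorial integral of $\vect{F}_G$ for an ordinary line integral of $\vect{F}$, and Theorem~\ref{MVTR}, which rewrites that line integral as a pointwise expression in the partial derivatives of $\vect{F}$.

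First I would unwind the left-hand side. Since $G$ has vertices only at the four corners of $\boundary R$, its edges are exactly the four sides of the rectangle, and $R$ is its only face. By the definition of whirl, $\whirl\vect{F}_G(R) = \cint_{\boundary R}\vect{F}_G$, where $\boundary R$ carries the orientation induced by the standard orientation of the face $R$, that is, the counterclockwise one. Expanding the combinatorial integral edge by edge and applying Lemma~\ref{CVF VF equiv} to each of the four edges gives
\[
\whirl\vect{F}_G(R) = \cint_{\boundary R}\vect{F}_G = \sum_{e\subset\boundary R}\int_e \vect{F}\vl = \int_{\boundary R}\vect{F}\vl,
\]
the usual counterclockwise line integral of $\vect{F}$ around $R$.

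Next I would apply Theorem~\ref{MVTR} (in the form that its proof actually produces): there are points $\vect{x},\vect{y}\in R$ such that
\[
\frac{1}{\area(R)}\int_{\boundary R}\vect{F}\vl = \llpartial{N}{x}(\vect{y}) - \llpartial{M}{y}(\vect{x}).
\]
Now the hypothesis that $M\equiv 0$ or $N\equiv 0$ finishes the job. If $N\equiv 0$, then $\partial N/\partial x\equiv 0$, so the right-hand side collapses to $-\partial M/\partial y(\vect{x})$, and since $\scurl\vect{F} = \partial N/\partial x - \partial M/\partial y = -\partial M/\partial y$ in this case, that is exactly $\scurl\vect{F}(\vect{x})$. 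If instead $M\equiv 0$, then $\partial M/\partial y\equiv 0$, the right-hand side is $\partial N/\partial x(\vect{y}) = \scurl\vect{F}(\vect{y})$, and renaming $\vect{y}$ as $\vect{x}$ gives the statement. Either way we produce a single point $\vect{x}\in R$ with $\frac{1}{\area(R)}\whirl\vect{F}_G(R) = \scurl\vect{F}(\vect{x})$.

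There is no real obstacle here; the only point needing care is the orientation bookkeeping in the first step, namely that the orientation the face $R$ induces on $\boundary R$ agrees with the counterclockwise parameterization of the four sides used in the proof of Theorem~\ref{MVTR}, so that the signs $\epsilon(e,\vect{F}_G)$ in the combinatorial integral come out with the right signs. It is also worth flagging why the hypothesis $M=0$ or $N=0$ is essential: in general Theorem~\ref{MVTR} only yields $\partial N/\partial x$ and $\partial M/\partial y$ evaluated at two possibly distinct points, and those need not assemble into $\scurl\vect{F}$ at a single point; killing one of the two terms is precisely what lets the two points be merged into one.
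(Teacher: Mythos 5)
Your proposal is correct and is exactly the argument the paper intends, since the paper presents this as an immediate corollary of Theorem \ref{MVTR} together with Lemma \ref{CVF VF equiv}: identify $\whirl \vect{F}_G(R)$ with the counterclockwise line integral $\int_{\boundary R}\vect{F}\vl$, apply the mean value theorem for rectangles, and use the hypothesis $M=0$ or $N=0$ to collapse the two evaluation points to one. You were also right to work with the form $\llpartial{N}{x}(\vect{y}) - \llpartial{M}{y}(\vect{x})$ that the proof of Theorem \ref{MVTR} actually produces (the displayed statement there has the partials transposed), which is precisely what makes the conclusion read as $\scurl\vect{F}$ at a single point.
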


Similarly, Theorem \ref{MVTR} will tell us that the limit of Whirl on rectangles is Curl. This gives a rigorous proof of Equation \eqref{Curl Revealed} for the case when the curves $\gamma_n$ are the boundaries of rectangles, without the use of \hyperref[Episode 3]{Green's theorem}.

\begin{corollary}
Suppose that  $R_n$ is a sequence of rectangles in $\R^2$, each with sides parallel to the $x$ and $y$ axes. Suppose that as $n \to \infty$, the rectangles $R_n$ converge to a point $\vect{x} \in \R^2$ and that $\vect{F}$ is a C$^1$ vector field defined on $\bigcup_n R_n$. Then
\[
\lim_{n \to \infty} \frac{1}{\area(R_n)} \int_{\boundary R_n} \vect{F} \vl = \scurl \vect{F}(\vect{x}).
\]
\end{corollary}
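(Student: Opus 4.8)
The plan is to push all the work onto Theorem~\ref{MVTR} and then pass to the limit using only continuity of the first partials of $\vect{F}$; in particular, no appeal to Green's theorem is needed. Write $\vect{F} = \begin{pmatrix} M \\ N \end{pmatrix}$, and note that each $R_n$ may be assumed to have positive area (otherwise the left-hand side of the claimed identity is undefined). For each $n$, Theorem~\ref{MVTR} applied to $\vect{F}$ on $R_n$ produces points $\vect{x}_n, \vect{y}_n \in R_n$ with
\[
\frac{1}{\area(R_n)} \int_{\boundary R_n} \vect{F} \vl = \llpartial{N}{x}(\vect{y}_n) - \llpartial{M}{y}(\vect{x}_n).
\]

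First I would make precise the hypothesis that the $R_n$ ``converge to a point $\vect{x}$'': it says exactly that the largest distance from a point of $R_n$ to $\vect{x}$ tends to $0$ as $n \to \infty$ (equivalently, the diameters of the $R_n$ tend to $0$ and $\vect{x}$ is a limit point of $\bigcup_n R_n$). Consequently, since $\vect{x}_n, \vect{y}_n \in R_n$ for every $n$, both sequences $(\vect{x}_n)$ and $(\vect{y}_n)$ converge to $\vect{x}$. Because $\vect{F}$ is of class C$^1$, the functions $\llpartial{N}{x}$ and $\llpartial{M}{y}$ are continuous, so $\llpartial{N}{x}(\vect{y}_n) \to \llpartial{N}{x}(\vect{x})$ and $\llpartial{M}{y}(\vect{x}_n) \to \llpartial{M}{y}(\vect{x})$. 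Letting $n \to \infty$ in the displayed equation therefore gives
\[
\lim_{n \to \infty} \frac{1}{\area(R_n)} \int_{\boundary R_n} \vect{F} \vl = \llpartial{N}{x}(\vect{x}) - \llpartial{M}{y}(\vect{x}) = \scurl \vect{F}(\vect{x}),
\]
which is the assertion.

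I do not expect a genuine obstacle: the entire analytic content — that the normalized circulation around a rectangle is a difference of partial derivatives of $\vect{F}$ evaluated at honest points of the rectangle — is already contained in Theorem~\ref{MVTR}. The only place where care is warranted is the first step, namely interpreting ``$R_n$ converges to $\vect{x}$'' strongly enough that \emph{any} choice of points in $R_n$ is forced to converge to $\vect{x}$; once that is in hand, continuity of the partials closes the argument immediately.
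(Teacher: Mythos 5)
Your proof is correct and is essentially the paper's own argument: apply Theorem \ref{MVTR} to each $R_n$ to obtain points $\vect{x}_n, \vect{y}_n \in R_n$ with $\frac{1}{\area(R_n)}\int_{\boundary R_n}\vect{F}\vl$ equal to a difference of first partials evaluated at those points, then use continuity of the partials and the convergence $R_n \to \vect{x}$ to pass to the limit. (You have also, correctly, written the right-hand side as $\llpartial{N}{x}(\vect{y}_n) - \llpartial{M}{y}(\vect{x}_n)$, which is what the proof of Theorem \ref{MVTR} actually establishes and what equals $\scurl\vect{F}$; the printed statement of that theorem transposes the variables of differentiation.)
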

Observe that the integral on the left hand side of the equation is the whirl of $\vect{F}_{G_n}$ where $G_n$ is the graph with vertices at the corners of $R_n$.
\begin{proof}
By Theorem \ref{MVTR}, there exist $\vect{x}_n, \vect{y}_n \in R_n$ such that
\[
\frac{1}{\area(R_n)}\int_{\boundary R_n} \vect{F} \vl = \llpartial{N}{y}(\vect{y}_n) - \llpartial{M}{x}(\vect{x}_n).
\]
Since $\vect{F}$ is C$^1$, both $\llpartial{N}{y}$ and $\llpartial{M}{x}$ are continuous. Since both $(\vect{y}_n)$ and $(\vect{x}_n)$ converge to $\vect{x}$, the quantity $\llpartial{N}{y}(\vect{y}_n) - \llpartial{M}{x}(\vect{x}_n)$ converges to $\scurl \vect{F}(\vect{x})$ as desired.
\end{proof}

We are very close to obtaining Green's theorem for certain types of surfaces. To be precise, we say that a compact surface $S \subset \R^2$ and a vector field $\vect{F}$ on $S$ \defn{satisfy Green's theorem} if
\[
\int_{\boundary S} \vect{F} \vl = \iint_{S} \scurl \vect{F} \thinspace dA.
\]

With what we have so far, it is easy to show:
\begin{theorem}\label{Green's for V/H}
If $S \subset \R^2$ is a compact surface with boundary that is the union of finitely many horizontal and vertical line segments (we call such a surface an \defn{V/H surface}), then $S$ and any C$^1$ vector field $\vect{F} = \begin{pmatrix} M \\ N \end{pmatrix}$ on $S$ satisfy Green's theorem.
\end{theorem}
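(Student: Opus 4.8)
The plan is to pin the numerical identity $\int_{\boundary S}\vect F\vl=\iint_S\scurl\vect F\,dA$ between the two members of a single chain of equalities: on one end the Whirl Theorem (\hyperref[Whirl Theorem]{Episode 3'}) applied on finer and finer rectangular grids adapted to $S$, and on the other end a Riemann sum for $\iint_S\scurl\vect F\,dA$ produced by Corollary \ref{Circulation and Curl}. Since both sides of Green's theorem are linear in $\vect F$, it is enough to prove it when $\vect F=\begin{pmatrix}M\\0\end{pmatrix}$ and when $\vect F=\begin{pmatrix}0\\N\end{pmatrix}$ and then add; note this reduction is carried out at the level of the real numbers, not of the (non-linear) induced CVF. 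So fix such an $\vect F$, with $M\equiv 0$ or $N\equiv 0$ on $S$.

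Next I set up the scenery. Extend every horizontal and vertical segment of $\boundary S$ to a full line; these finitely many lines cut $S$ into finitely many axis-parallel solid rectangles and thereby determine a combinatorial surface $(S,G_0)$ whose faces are exactly those rectangles (the Sch\"onflies-type verifications that this is an honest combinatorial surface are among the topological points the paper has licensed us to skip). For $n\ge 1$ let $G_n$ be obtained from $G_{n-1}$ by also inserting the lines $x=k/2^n$ and $y=k/2^n$ for all integers $k$; each $(S,G_n)$ is again a combinatorial surface with rectangular faces, the original boundary segments still lie in $\bigcup\mc E$, and the mesh (the largest diameter of a face) tends to $0$. Give $(S,G_n)$ the standard orientation, so that the induced orientation on $\boundary S$ is the one with $S$ on the left, matching \hyperref[Episode 3]{Episode 3}.

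Now form the induced CVF $\vect F_{G_n}$ and run the Whirl Theorem:
\[
\cint_{\boundary S}\vect F_{G_n}=\cint_S\whirl\vect F_{G_n}=\sum_{R\in\mc F}\whirl\vect F_{G_n}(R).
\]
On the left, Lemma \ref{CVF VF equiv} applied edge by edge (each with its induced orientation) together with additivity of line integrals under subdivision gives $\cint_{\boundary S}\vect F_{G_n}=\int_{\boundary S}\vect F\vl$, a number independent of $n$. On the right, every face $R$ of $G_n$ is a rectangle whose only vertices of $G_n$ lying on $\boundary R$ are its four corners, so the edges of $\boundary R$ in $G_n$ are precisely the four edges of the corner graph of $R$; using Lemma \ref{CVF VF equiv} once more, $\whirl\vect F_{G_n}(R)$ equals the quantity $\whirl(\vect F|_R)_G(R)$ of Corollary \ref{Circulation and Curl}. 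Since $M\equiv 0$ or $N\equiv 0$, that corollary produces a point $\vect x_R\in R$ with $\whirl\vect F_{G_n}(R)=\area(R)\,\scurl\vect F(\vect x_R)$, whence
\[
\int_{\boundary S}\vect F\vl=\sum_{R\in\mc F}\area(R)\,\scurl\vect F(\vect x_R),
\]
a Riemann sum for $\iint_S\scurl\vect F\,dA$ relative to the partition of $S$ into the faces of $G_n$.

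Finally let $n\to\infty$. As $\vect F$ is C$^1$, $\scurl\vect F$ is continuous, hence Riemann integrable over the (Jordan-measurable) V/H surface $S$; since the mesh of $G_n$ tends to $0$, these Riemann sums converge to $\iint_S\scurl\vect F\,dA$. The left-hand side is independent of $n$, so it equals the limit, proving Green's theorem for this $\vect F$; adding the $M$-only and $N$-only cases gives the theorem for an arbitrary C$^1$ field on a V/H surface. \textbf{The main obstacle} I anticipate is not in the analytic core but in the two bookkeeping points where the ``ignored topological issues'' are concentrated: verifying that the grid construction really yields combinatorial surfaces with $\boundary S\subset\bigcup\mc E$ and rectangular disc faces, and that the faces of $G_n$ form a legitimate mesh-$\to 0$ partition of $S$ with $\sum_R\area(R)=\area(S)$, so that the sums above are genuine Riemann sums converging to the double integral. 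Both are routine consequences of a V/H surface being a finite union of axis-parallel rectangles, but they are where care is needed; everything else is a direct appeal to the Whirl Theorem, Lemma \ref{CVF VF equiv}, and Corollary \ref{Circulation and Curl} (one could equally bypass the linearity reduction by invoking Theorem \ref{MVTR} directly on each face and splitting the resulting expression as a difference of two Riemann sums for $\iint_S\llpartial{N}{y}$ and $\iint_S\llpartial{M}{x}$).
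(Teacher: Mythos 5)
Your proposal is correct and follows essentially the same route as the paper: split $\vect F$ into its $M$-only and $N$-only parts, induce CVFs on a sequence of rectangular grids with mesh tending to zero, apply the Whirl Theorem together with Lemma \ref{CVF VF equiv} to identify $\cint_{\boundary S}\vect F_{G_n}$ with $\int_{\boundary S}\vect F\vl$, convert each face's whirl into $\area(R)\,\scurl\vect F(\vect x_R)$ via Corollary \ref{Circulation and Curl}, and pass to the limit of Riemann sums. The only difference is that you supply an explicit dyadic construction of the grids $G_n$ where the paper simply asserts their existence, which is a welcome (if routine) addition.
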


\begin{proof}
For each $n$, let $G_n$ be a graph in $S$, such that:
\begin{itemize}
\item $(S,G_n)$ is a combinatorial surface.
\item Each edge of $G_n$ is either a horizontal or a vertical line segment.
\item Every face of $G_n$ is a rectangle.
\item For all $n$, $G_n \subset G_{n+1}$.
\item As $n \to \infty$, the maximal diameter of a face of $G_n$ converges to 0.
\end{itemize}

Let $\vect{F}^\vect{i} = \begin{pmatrix} M \\ 0 \end{pmatrix}$ and $\vect{F}^\vect{j} = \begin{pmatrix} 0 \\ N \end{pmatrix}$ and let $\vect{F}^*$ be either of $\vect{F}^\vect{i}$ or $\vect{F}^\vect{j}$. Define $\vect{F}_n = \vect{F}^*_{G_n}$.

By Lemma \ref{CVF VF equiv}, we have 
\begin{equation}\label{defn of F_n}
\int_{\boundary S} \vect{F} \vl = \cint_{\boundary S} \vect{F}_n.
\end{equation} By the Whirl Theorem,
\begin{equation}\label{apply whirl thm}
\cint_{\boundary S} \vect{F}_n = \ciint_{S} \whirl \vect{F}_n.
\end{equation}

By definition, 
\begin{equation}\label{apply defn of int}
\ciint_S \whirl \vect{F}_n = \sum_{i=1}^{m_n} \whirl \vect{F}_n(\sigma^n_i)
\end{equation}
 where $\sigma^n_1, \hdots, \sigma^n_{m_n}$ are the faces of $G_n$. Since those faces are all rectangles, by Lemma \ref{Circulation and Curl}, there exists $\vect{x}^n_i \in \sigma^n_i$ such that $\whirl \vect{F}_n(i) = \scurl \vect{F}(\vect{x}^n_i)(\area (\sigma^n_i))$. See Figure \ref{Fig: Riemann} for an example. 

\begin{figure}[ht]
\includegraphics[scale=0.4]{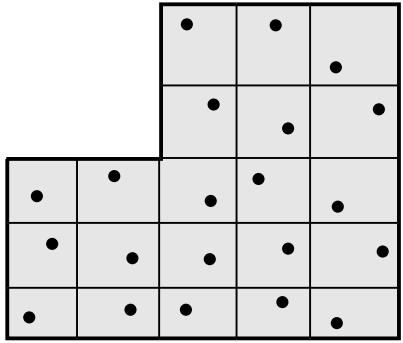}
\caption{A combinatorial surface $(S,G_n)$ with the points $\vect{x}^n_i$ marked in each face $\sigma^n_i$}.
\label{Fig: Riemann}
\end{figure}
Combining Equations \eqref{defn of F_n}, \eqref{apply whirl thm}, and \eqref{apply defn of int}:
\[
\int_{\boundary S} \vect{F} \vl = \sum_{i=1}^{m_n} \scurl \vect{F}(\vect{x}^n_i)(\area (\sigma^n_i)).
\]
But the right-hand side is a Riemann sum of a continuous function, and the left-hand side is constant in $n$ and so, taking the limit as $n \to \infty$, we conclude that $S$ and $\vect{F}^{\vect{i}}$ and $S$ and $\vect{F}^\vect{j}$ satisfy Green's theorem.

Since 
\[\begin{array}{rcl}
\int_{\boundary S} \vect{F} \vl &=& \int_{\boundary S} \vect{F}^\vect{i} \vl + \int_{\boundary S} \vect{F}^\vect{j} \vl\text{ , and } \\
\iint_S \scurl \vect{F}\thinspace dA &=& \iint_S \scurl \vect{F}^\vect{i}\thinspace dA + \iint_S \scurl \vect{F}^\vect{j}\thinspace dA 
\end{array}\]
the surface $S$ and the vector field $\vect{F}$ satisfy Green's theorem.
\end{proof}

In the next section, we perform a play-within-the-play to show that we can obtain Green's Theorem for surfaces other than V/H-surfaces.

\subsection{Bending the scenery}
In this section we rely heavily on the elementary linear algebra of $2\times2$ matrices. We denote the transpose of a matrix $B$ by $B^T$ and the derivative of a function $H$ at a point $a$ by $DH(a)$. In our context, $DH(a)$ will always be a 2$\times$2 matrix. 

For reasons that will become evident we define the \defn{scalar curl} of  a 2$\times$2 matrix to be
\[
\scurl \begin{pmatrix}a & b \\ c & d \end{pmatrix} = c - b
\]
An easy computation shows that, for 2$\times$2 matrices $A$ and $B$,
\begin{equation}\label{congruence}
\scurl (B^T A B) = (\scurl A)\det B.
\end{equation}
If $\vect{F}\co S \to \R^2$ is a vector field on a surface $S \subset \R^2$ we observe by Definition \eqref{Disguised Curl} that at $x \in S$
\[
\scurl \vect{F}(x) = \scurl D\vect{F}(\vect{a}).
\]

Using these simple definitions, we can study the relationship between surfaces related by certain kinds of distortions. Let $\wihat{S}$ and $S$ be compact connected surfaces in $\R^2$, each bounded by piecewise C$^1$ curves. Suppose that $H\co \wihat{S} \to S$ is a continuous bijection of class C$^2$ (i.e. all second partial derivatives exist and are continuous) and with the property that $\det DH$ is non-zero on $\wihat{S}$. We call $H$ a \defn{diffeomorphism} from $\wihat{S}$ to $S$. Since $H$ is C$^2$, the entries of $DH$ are continuous and $\det DH$ is never 0 on the interior of $S$. Thus, the sign $\epsilon$ of $\det DH$ is either always positive or always negative. If it is the former, we say that $H$ is \defn{orientation-preserving}; and if the latter, that $H$ is \defn{orientation-reversing}. 

\begin{example}\label{Not Type III}
Let $f,g\co \R \to \R$ be C$^2$ functions with the property that $f(x) < g(x)$ for all $x$. Let $a < b$ and $c < d$ be real numbers. Define
\[
H(x,y) = \Big(x, \frac{g(x) - f(x)}{d-c}(y - c) + f(x)\Big).
\]
Then $H \co \R^2 \to \R^2$ is a C$^2$ function with the property that $\det DH(x,y) > 0$ for all $(x,y)$. Let $\wihat{S} = [a,b] \times [c,d]$ and let $S = H(\wihat{S})$. Then the restriction of $H$ to $\wihat{S}$ is an orientation preserving diffeomorphism from the rectangle $\wihat{S}$ to $S$.

If we choose \[f(x) = \Big\{\begin{array}{lr} x^3\sin(1/x) \hspace{.2in} & x \neq 0 \\ 0 & x = 0 \end{array}\] and $g(x) = f(x) + 1$, then $H$ is a diffeomorphism from the square $\wihat{S} = [0,1]\times[0,1]$ to a region whose boundary has infinitely many oscillations (Figure \ref{Strange Region}). This region cannot be subdivided into finitely many Type III regions.
\end{example}

\begin{figure}[ht]
\labellist \small\hair 2pt 
\pinlabel {$\wihat{S}$} at 96 143 
\pinlabel {$S$} at 377 134
\pinlabel {$H$} [b] at 250 144 
\endlabellist 
\centering
\includegraphics[scale=0.4]{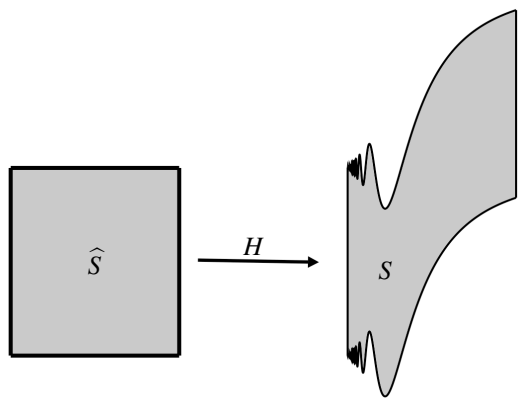}
\caption{The square $\wihat{S}$ is on the left and the distorted surface $S$ is on the right. The oscillations in $\boundary S$ have been exaggerated for effect.}
\label{Strange Region}
\end{figure}

Given a C$^1$ vector field $\vect{F}$ on $S$, we can ``pull'' it back to a C$^1$ vector field $\wihat{F}$ on $\wihat{S}$ defined by
\[
\wihat{\vect{F}} = DH^T(x)\vect{F}(H(x))\]
for all $x \in \wihat{S}$. The vector field $\wihat{F}$ is essentially the vector field $\vect{F}$ moved to the surface $\wihat{S}$ and adjusted to account for the distortion caused by $DH$.

The next theorem shows that integrals of the two vector fields around the boundaries of the surfaces are equal and their scalar curls are related in a particularly simple way.

\begin{theorem}\label{Thm: COV}
Let $\wihat{\gamma}\co [a,b] \to S$ be a C$^1$ curve, and let $\gamma = H\circ \wihat{\gamma}$. Then
\[\begin{array}{rcl}
\int\limits_{\boundary S} \vect{F} \vl &=&\epsilon \int\limits_{\boundary \wihat{S}} \wihat{F} \vl, \text{ and} \\
\scurl \wihat{\vect{F}}(x) &=&\epsilon \scurl \vect{F}(H(x))|\det DH(x)| \\
\end{array}
\]
with the last equality holding for all $x \in \wihat{S}$. (Recall that $\epsilon$ is the sign of $\det DH$.)
\end{theorem}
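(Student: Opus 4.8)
The plan is to prove the two claimed identities separately, taking the second (the pointwise scalar curl identity) first, since it is the algebraic heart of the matter, and then deducing the boundary-integral identity from a change-of-variables computation that uses the curl identity together with the classic Episode 3 (Green's theorem) in the special case we already know — actually, better: deducing the boundary integral identity directly by a substitution in the line integral, with no appeal to Green's theorem at all. Let me describe both steps.

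For the scalar curl identity, I would start from the observation already recorded in the text that $\scurl \vect{G}(y) = \scurl D\vect{G}(y)$ for any C$^1$ vector field $\vect{G}$ and any point $y$, so it suffices to compute $D\wihat{\vect{F}}(x)$. Here $\wihat{\vect{F}}(x) = DH(x)^T \vect{F}(H(x))$, a product of an $x$-dependent matrix and an $x$-dependent vector, so differentiating by the product rule gives $D\wihat{\vect{F}}(x)$ as a sum of two terms: one in which $DH(x)^T$ is differentiated (this involves the second derivatives of $H$, which exist and are continuous since $H$ is C$^2$), and one in which $\vect{F}\circ H$ is differentiated (by the chain rule this contributes $DH(x)^T\, D\vect{F}(H(x))\, DH(x)$). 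The first term, the one involving $D(DH^T)$, I claim contributes nothing to the scalar curl: it is a ``Hessian-type'' term, and the key point is that $\partial^2 H_k/\partial x\,\partial y = \partial^2 H_k/\partial y\,\partial x$ (equality of mixed partials, valid because $H$ is C$^2$), which forces the relevant $c - b$ combination to vanish. So after taking $\scurl$, only the second term survives, and it equals $\scurl\big(DH(x)^T\, D\vect{F}(H(x))\, DH(x)\big)$. Now apply Equation \eqref{congruence} with $B = DH(x)$ and $A = D\vect{F}(H(x))$: this gives $(\scurl D\vect{F}(H(x)))\det DH(x) = \scurl\vect{F}(H(x))\det DH(x)$. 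Finally, since $\det DH$ has constant sign $\epsilon$, we have $\det DH(x) = \epsilon|\det DH(x)|$, yielding exactly $\scurl\wihat{\vect{F}}(x) = \epsilon\,\scurl\vect{F}(H(x))|\det DH(x)|$.

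For the boundary-integral identity, I would argue directly on the line integral. Parametrize $\boundary\wihat S$ by a piecewise C$^1$ loop $\wihat\gamma$ traversing it with $\wihat S$ on the left, and set $\gamma = H\circ\wihat\gamma$. The chain rule gives $\gamma'(t) = DH(\wihat\gamma(t))\,\wihat\gamma'(t)$. Then
\[
\int_{\gamma}\vect{F}\vl = \int_a^b \vect{F}(\gamma(t))\cdot\gamma'(t)\,dt = \int_a^b \vect{F}(H(\wihat\gamma(t)))\cdot \big(DH(\wihat\gamma(t))\,\wihat\gamma'(t)\big)\,dt,
\]
and the integrand, using the identity $u\cdot(Bv) = (B^Tu)\cdot v$, equals $\big(DH(\wihat\gamma(t))^T\vect{F}(H(\wihat\gamma(t)))\big)\cdot\wihat\gamma'(t) = \wihat{\vect{F}}(\wihat\gamma(t))\cdot\wihat\gamma'(t)$, which integrates to $\int_{\wihat\gamma}\wihat{\vect{F}}\vl$. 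So $\int_{\gamma}\vect{F}\vl = \int_{\wihat\gamma}\wihat{\vect{F}}\vl$ as oriented line integrals. The only remaining subtlety is orientation: $\gamma = H\circ\wihat\gamma$ traverses $\boundary S$, but whether it does so with $S$ on the left or on the right depends on whether $H$ preserves or reverses orientation, i.e. on the sign $\epsilon$. When $\epsilon = +1$, $\gamma$ already has the correct (positive) orientation on $\boundary S$ and we get $\int_{\boundary S}\vect{F}\vl = \int_{\boundary\wihat S}\wihat{\vect{F}}\vl$; when $\epsilon = -1$, $\gamma$ traverses $\boundary S$ backwards, so $\int_{\boundary S}\vect{F}\vl = -\int_{\gamma}\vect{F}\vl = -\int_{\boundary\wihat S}\wihat{\vect{F}}\vl$. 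In both cases this is $\int_{\boundary S}\vect{F}\vl = \epsilon\int_{\boundary\wihat S}\wihat{\vect{F}}\vl$, as claimed.

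The main obstacle, and the step deserving the most care, is the vanishing of the ``second-derivative of $H$'' term under $\scurl$ in the first part: one has to write out $D\wihat{\vect{F}}(x)$ entrywise, identify precisely which entries enter the $c-b$ combination, and see the cancellation come from symmetry of mixed partials; this is the one genuinely computational point, whereas everything else is either bookkeeping with the chain rule or a citation of Equation \eqref{congruence}. A secondary, more conceptual point that must be handled honestly is the orientation statement for the boundary — in particular, the tacit use of the Jordan–Schönflies-type fact (flagged in the paper's ``fine print'') that an orientation-preserving diffeomorphism carries the positively oriented boundary to the positively oriented boundary — but in the spirit of the paper this can be asserted without proof.
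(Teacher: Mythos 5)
Your proposal is correct and follows essentially the same route as the paper: the curl identity via the product rule, vanishing of the Hessian-type term by equality of mixed partials, the chain rule, and Equation \eqref{congruence}; and the boundary identity via the substitution $\gamma = H\circ\wihat{\gamma}$ together with $u\cdot(Bv)=(B^{T}u)\cdot v$ (the paper runs this computation starting from $\wihat{\gamma}$ and $DH^{-1}$ rather than from $\gamma$ and $DH$, an immaterial difference). Your explicit handling of the orientation sign $\epsilon$ is sound; the paper itself defers that bookkeeping to the proof of Corollary \ref{Cor: Green's Satisfied}.
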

\begin{proof}
By Definition \eqref{Defn vl}, we have
\[
\int_{\wihat{\gamma}} \wihat{F}\vl = \int_a^b \Big(DH^T(\gamma(t))\vect{F}(\gamma(t))\Big)\cdot \wihat{\gamma}\thinspace'(t) \thinspace dt.
\]
By the chain rule  \cite[Theorem 8.15]{Browder}, 
\[
\wihat{\gamma}\thinspace'(t) = \frac{d}{dt} H^{-1}(\gamma(t)) = DH^{-1}(\gamma(t))\gamma'(t).
\]
Since for any two vectors $\vect{v}, \vect{w} \in \R^2$, $\vect{v} \cdot \vect{w} = \vect{v}^T\vect{w}$ and since for any two 2$\times$2 matrices $A,B$, $(AB)^T = B^TA^T$, we have
\[
\Big(DH^T(\gamma(t))\vect{F}(\gamma(t))\Big)\cdot \wihat{\gamma}'(t) =  \vect{F}(\gamma(t))\cdot \gamma'(t).
\]
Consequently,
\[
\int_{\wihat{\gamma}} \wihat{F}\vl = \int_{\gamma}\vect{F}\vl,
\]
which is the first conclusion of the theorem.

To obtain the second conclusion, let $H(x,y) = (H_1(x,y), H_2(x,y))$ and let 
\[
Q_1 = \begin{pmatrix} \frac{\partial^2 H_1}{\partial x^2} & \frac{\partial^2 H_1}{\partial y \partial x} \\
				\frac{\partial^2 H_1}{\partial x\partial y} & \frac{\partial^2 H_1}{\partial y ^2}
	\end{pmatrix}
\hspace{.5in} \text{ and } \hspace{.5in}
Q_2 = \begin{pmatrix} \frac{\partial^2 H_2}{\partial x^2} & \frac{\partial^2 H_2}{\partial y \partial x} \\
				\frac{\partial^2 H_2}{\partial x\partial y} & \frac{\partial^2 H_2}{\partial y ^2}
	\end{pmatrix}.
\]
Notice that by the equality of mixed second partial derivatives \cite[Theorem 8.24]{Browder}, $\scurl Q_1 = \scurl Q_2 = 0$.

A computation shows that $D\wihat{\vect{F}}$ is equal to
\[
M(H)Q_1 + N(H)Q_2 + \big(DH^T\big)D(\vect{F}(H))
\]
The scalar curl for matrices is linear and so
\[
\scurl \wihat{\vect{F}} = \scurl \big(DH^TD(\vect{F}(H))\big) = \scurl \big(DH^TD\vect{F}(H)DH\big),
\]
where we've used the chain rule to obtain the second equality. Thus, by Equation \eqref{congruence}, $\scurl \wihat{\vect{F}} = \big(\scurl \vect{F}\big) \det DH$ and so
\[
\scurl \wihat{\vect{F}} = \epsilon \big(\scurl \vect{F}\big) \big|\det DH\big|,
\]
as desired.
\end{proof}

\begin{corollary}\label{Cor: Green's Satisfied}
Let $\wihat{S},S \subset \R^2$ be compact surfaces with piecewise C$^1$ boundaries and let $H\co \wihat{S} \to S$ be a diffeomorphism. Let $\vect{F}$ be a vector field on $S$ and let $\wihat{\vect{F}} = DH^T\vect{F}(H)$. Then $S$ and $\vect{F}$ satisfy Green's theorem if and only if $\wihat{S}$ and $\wihat{\vect{F}}$ do.
\end{corollary}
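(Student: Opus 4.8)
The plan is to derive the equivalence directly from Theorem \ref{Thm: COV} together with the Change of Variables theorem for double integrals; no new analysis is required, and the argument will be visibly reversible so that the ``if and only if'' comes for free. First I would recall that, by definition, $S$ and $\vect{F}$ satisfy Green's theorem precisely when $\int_{\boundary S} \vect{F}\vl = \iint_{S} \scurl \vect{F} \thinspace dA$, and similarly for $\wihat{S}$ and $\wihat{\vect{F}}$. So the whole task is to match the two sides of these identities under $H$.

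For the boundary integral, I would apply the first conclusion of Theorem \ref{Thm: COV}: parametrizing $\boundary \wihat{S}$ by a piecewise C$^1$ curve $\wihat{\gamma}$ and setting $\gamma = H\circ\wihat{\gamma}$, this gives $\int_{\boundary S} \vect{F}\vl = \epsilon \int_{\boundary \wihat{S}} \wihat{\vect{F}}\vl$. For the area integral, since $H$ is a diffeomorphism, the Change of Variables theorem yields $\iint_{S} \scurl \vect{F} \thinspace dA = \iint_{\wihat{S}} \scurl \vect{F}(H(x))\thinspace \bigl|\det DH(x)\bigr| \thinspace dA$, and the second conclusion of Theorem \ref{Thm: COV} identifies the integrand on the right as $\epsilon \scurl \wihat{\vect{F}}(x)$, so $\iint_{S} \scurl \vect{F} \thinspace dA = \epsilon \iint_{\wihat{S}} \scurl \wihat{\vect{F}} \thinspace dA$. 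Putting these together, the equation $\int_{\boundary S} \vect{F}\vl = \iint_{S} \scurl \vect{F}\thinspace dA$ holds if and only if $\epsilon \int_{\boundary \wihat{S}} \wihat{\vect{F}}\vl = \epsilon \iint_{\wihat{S}} \scurl \wihat{\vect{F}}\thinspace dA$; since $\epsilon = \pm 1$ we cancel it, and the resulting equation is exactly the statement that $\wihat{S}$ and $\wihat{\vect{F}}$ satisfy Green's theorem.

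The one point demanding care — the main obstacle, such as it is — is orientation bookkeeping: one must be sure that the same $\epsilon$ appears in both rewrites, i.e. that the standard boundary orientation of $\boundary \wihat{S}$ is pushed forward by $H$ to the standard boundary orientation of $\boundary S$ when $\det DH > 0$ and to its reverse when $\det DH < 0$, so that the $\epsilon$ coming from the orientation of the boundary curve in Theorem \ref{Thm: COV} genuinely cancels the $\epsilon$ coming from $|\det DH|$ in the Change of Variables step. Finally, if $S$ (hence $\wihat{S}$) is not connected one applies the argument componentwise, since the hypotheses of Theorem \ref{Thm: COV} were stated for connected surfaces and both sides of Green's theorem are additive over components.
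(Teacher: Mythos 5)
Your proposal is correct and follows essentially the same route as the paper: Theorem \ref{Thm: COV} handles the boundary integral, the Change of Variables theorem combined with the scalar curl identity handles the area integral, and the sign $\epsilon$ cancels between the two. The paper likewise reduces to connected components and notes the same orientation bookkeeping for $H(\gamma)$ that you flag.
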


\begin{proof}
Without loss of generality, we may assume that $S$ and $\wihat{S}$ are connected (if not, do what follows for each component). If $\gamma\co [a,b] \to \R^2$ is a C$^1$ parameterization of a portion of $\boundary \wihat{S}$, then $H(\gamma)$ is a  C$^1$ parameterization of a portion of $\boundary S$, possibly with the wrong orientation. If $H$ is orientation-preserving, then $H(\gamma)$ has the same orientation as that of $\boundary S$; otherwise, it has the opposite orientation. Applying Theorem \ref{Thm: COV} to the C$^1$ portions of $\boundary S$, we see that
\begin{equation}\label{Line integrals equal}
\int_{\boundary \wihat{S}} \wihat{\vect{F}} \vl = \epsilon \int_{\boundary S} \vect{F} \vl.
\end{equation}

We now turn to double integrals over $S$ and $\wihat{S}$. By the change of variables theorem \cite[Theorem 17.1]{Munkres} (applied to the interiors of $S$ and $\wihat{S}$), for any C$^1$ function $f\co S \to \R$:
\[
\iint_{S} f \thinspace dA = \iint_{\wihat{S}} f\circ H \thinspace |\det DH| \thinspace dA
\]
Letting $f = \scurl \vect{F}$ and applying Theorem \ref{Thm: COV}, we obtain:

\[
\epsilon \iint_S \scurl \vect{F} \thinspace dA =  \iint_{\wihat{S}} \scurl \vect{\wihat{F}} \thinspace dA.
\]

Thus, the surface $S$ and vector field $\vect{F}$ satisfy Green's theorem if and only if the surface $\wihat{S}$ and vector field $\wihat{F}$ do.
\end{proof}

\begin{corollary}\label{Porting Green's}
Suppose that $\wihat{S} \subset \R^2$ is a V/H surface and that there is a diffeomorphism $H$ of $\wihat{S}$ onto a surface $S \subset \R^2$. Then $S$ and $\vect{F}$ satisfy Green's theorem for any C$^1$ vector field $\vect{F}$ on $S$.
\end{corollary}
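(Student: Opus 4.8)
The plan is to pull the vector field $\vect{F}$ back across the diffeomorphism and then appeal to the V/H case already in hand. Given a C$^1$ vector field $\vect{F}$ on $S$, set $\wihat{\vect{F}} = DH^T\vect{F}(H)$ on $\wihat{S}$, exactly the pullback featured in Theorem \ref{Thm: COV} and Corollary \ref{Cor: Green's Satisfied}. The first thing I would verify is that $\wihat{\vect{F}}$ is genuinely a C$^1$ vector field on $\wihat{S}$: since $H$ is a diffeomorphism it is of class C$^2$, so the entries of $DH^T$ are C$^1$, while $\vect{F}\circ H$ is C$^1$ as the composition of the C$^1$ map $\vect{F}$ with the C$^1$ map $H$ (its derivative $D\vect{F}(H)\,DH$ is continuous). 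A product of C$^1$ functions is C$^1$, so $\wihat{\vect{F}}$ is C$^1$.

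Now the two earlier results combine at once. Because $\wihat{S}$ is a V/H surface and $\wihat{\vect{F}}$ is a C$^1$ vector field on it, Theorem \ref{Green's for V/H} says that $\wihat{S}$ and $\wihat{\vect{F}}$ satisfy Green's theorem. On the other hand, Corollary \ref{Cor: Green's Satisfied} asserts that $\wihat{S}$ and $\wihat{\vect{F}}$ satisfy Green's theorem if and only if $S$ and $\vect{F}$ do; here I use that $\wihat{S}$ and $S$ are compact surfaces with piecewise C$^1$ boundary (the boundary of the V/H surface $\wihat{S}$ is a union of finitely many line segments, hence piecewise C$^1$) and that $H$ is a diffeomorphism between them. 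Chaining the two statements gives that $S$ and $\vect{F}$ satisfy Green's theorem, and since $\vect{F}$ was an arbitrary C$^1$ vector field on $S$, the corollary follows.

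There is essentially no obstacle beyond this bookkeeping: all the analytic content — the Mean Value Theorem argument and the Riemann sum behind Theorem \ref{Green's for V/H}, and the change-of-variables and chain-rule identities behind Corollary \ref{Cor: Green's Satisfied} — has already been done. The only point worth a sentence of care is the C$^1$-regularity of the pullback $\wihat{\vect{F}}$, which is precisely why the definition of diffeomorphism used here insists on class C$^2$: without it, $DH^T$ need not be differentiable and Theorem \ref{Green's for V/H} could not be applied to $\wihat{\vect{F}}$.
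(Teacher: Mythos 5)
Your proposal is correct and follows exactly the paper's own argument: pull $\vect{F}$ back to $\wihat{\vect{F}} = DH^T\vect{F}(H)$, note that the C$^2$ regularity of $H$ makes $\wihat{\vect{F}}$ a C$^1$ field, and then chain Theorem \ref{Green's for V/H} with Corollary \ref{Cor: Green's Satisfied}. The extra care you take with the C$^1$-regularity of the pullback and the piecewise C$^1$ boundary hypothesis is just a fuller writing-out of the same two-line proof the paper gives.
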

\begin{proof}
Since $H$ is C$^2$, the vector field $\wihat{F} = DH^T \vect{F}(H)$ is of class C$^1$. We then apply Corollary \ref{Cor: Green's Satisfied} and Theorem \ref{Green's for V/H}.
\end{proof}

By the classification of surfaces up to piecewise C$^2$ diffeomorphism, it follows that Green's theorem holds for all compact surfaces $S \subset \R^2$ with piecewise C$^2$ boundary (i.e. surfaces with boundary having parameterizations with continuously differentiable and non-vanishing first derivatives.) Making this precise would show us that  the coarse \hyperref[Whirl Theorem]{Episode 3'} can be improved to obtain, in the limit, the classic \hyperref[Episode 3]{Episode 3}. Rather than carrying a player piano up those stairs, however, we content ourselves with observing that our result is better than the traditional result that Type III surfaces and C$^1$ vector fields satisfy Green's theorem\footnote{Of course, we have not improved on Apostol's result \cite[Theorem 10.43]{Apostol}.}.

\begin{example}
Let $S$ be the surface 
\[
S = \{(x,y) \in \R^2 : 0 \leq x \leq 1, g(x) \leq y \leq g(x) + 1 \},
\]
where $g\co[0,1] \to \R$ is the function 
\[
g(x) = \Big\{ \begin{array}{lr} x^3 \sin(1/x) \hspace{.2in} & x \neq 0 \\ 0 & x = 0. \end{array}
\]
Then, by example \ref{Not Type III}, the surface $S$ and every C$^1$ vector field $\vect{F}$ satisfy Green's theorem.
\end{example}

\begin{challenge}
Use Theorem \ref{Porting Green's} (and maybe some other tricks) to show that the unit disc in $\R^2$ together with any C$^1$ vector field on it satisfy Green's theorem.
\end{challenge}

\section{The Season Finale}\label{Cohomology}
In this final section of the viewer's guide we explain how both the coarse actors and the original 3 stooges fit into the over-arching structure known as ``cohomology theory''. In fact, we will describe two cohomology theories, which can be thought of as the labor unions for the coarse actors and the original 3 stooges.

\subsection{The Coarse Screen Actor's Guild}
Let $(S,G)$ be an oriented combinatorial surface. The set $C^0(S,G)$ of VSFs is a finite dimensional real vector space (with dimension equal to the number of vertices). The set $C^2(S,G)$ of FSFs is also a finite dimensional real vector space (with dimension equal to the number of faces). In both cases, the vector space operations are the usual scaling and addition of real-valued functions.

If we fix an orientation on each edge of $e$, the set $C^1(S,G)$ of \emph{canonical} CVFs is also a real vector space\footnote{If you don't like working with only canonical CVFs, it is instead possible to work with the set of equivalence classes of CVFs under the equivalence relation $\same$.}, as follows. Suppose that $k \in \R$ and that $\vect{F}$ is a canonical CVF. Let $k\vect{F}$ be the CVF where each edge has the same orientation as that given by $\vect{F}$ and $(k\vect{F})(e) = k\vect{F}(e)$ for each edge $e \in \mc{E}$. For canonical CVFs $\vect{F}$ and $\vect{G}$ and an edge $e$ we let $\vect{F} + \vect{G}$ give $e$ the same orientation as that given by $\vect{F}$ and $\vect{G}$ and we define $(\vect{F} + \vect{G})(e) = \vect{F}(e) + \vect{G}(e)$. Note that $k\vect{F}$ and $\vect{F} + \vect{G}$ are canonical CVFs. Then $C^1(S,G)$ is a real vector space with dimension equal to the number of edges in $G$.

The vector space $C^i(S,G)$ is known as the $i$th \defn{cochain group} of $(S,G)$. It depends on both $S$ and $G$. However, the quantity,  known as the \defn{euler characteristic} of $S$, equal to $\dim C^0(S) - \dim C^1(S) + \dim C^2(S)$ is independent of $G$.  It is the primary example of what is known as a ``topological invariant'' of $S$.  We would like to turn the cochain groups themselves into topological invariants. The resulting vector spaces are called the ``cohomology groups'' of $S$. To explain how, we begin with a brief digression to the world of quotient vector spaces.

Whenever we have a vector space $V$ and a subspace $W$ we can form a new vector space $V/W$ called the \defn{quotient vector space} as follows. We declare $v_1, v_2 \in V$ to be equivalent, if $v_1 - v_2 \in W$. That is, $v$ and $w$ are ``the same''\footnote{If you haven't encountered equivalence relations before, you may like to compare this to how we work with angles: Two angles $\alpha$ and $\beta$ are the same angle if and only if $\alpha - \beta \in \{\hdots, -4\pi, -2\pi, 0, 2\pi, 4\pi, \hdots\}$.} if they differ by an element of $W$. The set $V/W$ is the set of equivalence classes and the vector space operations on $V$ produce well-defined vector space operations on $V/W$. For example, if $v \in V$, we let $[v]$ be the set of all vectors who differ from $v$ by an element of $W$ and we define $[u] + [v] = [u + v]$ for $u,v \in V$ and $k[v] = [kv]$ for $k \in \R$ and $v \in V$.

For an oriented combinatorial surface $(S,G)$ where every edge in $\mc{E}$ has also been given an orientation, we observe that $\whirl\co C^1(S,G) \to C^2(S,G)$ is a linear map. Since the image of Tilt lies in the kernel of Whirl (by the \hyperref[Episode 2A]{TiltaWhirl Theorem}), we might try to form the quotient vector space $\ker \whirl/\im \tilt$. Unfortunately, though, the vector fields produced by Tilt may not be canonical and, thus, might not be elements of $C^1(S,G)$. To fix this, we alter the definition of Tilt, to produce a similar operator, which we call $\ob{\tilt}$.

For a VSF $f\co \mc{V} \to \R$, and an oriented edge $e \in \mc{E}$ with source vertex $v$ and sink vertex $w$, define
\[
\ob{\tilt} f(e) = f(w) - f(v)
\]
and let $\ob{\tilt} f$ give $e$ the fixed orientation given to $e$ at the outset. Then $\ob{\tilt} f$ is a canonical CVF and, in fact, it is the unique canonical CVF that is the same as $\tilt f$. The map $\ob{\tilt} \co C^0(S,G) \to C^1(S,G)$ is linear and (by the \hyperref[Episode 2A]{TiltaWhirl Theorem}) the image of $\ob{\tilt}$ is a subset of the kernel of $\whirl$. Thus, our coarse actors $\ob{\tilt}$ and $\whirl$ are linear maps between cochain groups:
\[
C^0(S,G) \stackrel{\ob{\tilt}}{\to} C^1(S,G) \stackrel{\whirl}{\to} C^2(S,G).
\]
We define:
\[\begin{array}{rcl}
 H^0(S) &=& \ker \thinspace \ob{\tilt} \\
 H^1(S) &=& (\ker \thinspace \whirl)/(\im \thinspace \ob{\tilt})\\
 H^2(S) &=& \im\thinspace \whirl.
 \end{array}\]
 The vector space $H^i(S)$ is called the $i$th cohomology group of $S$. It turns out that, up to vector space isomorphism, it does not depend on $G$. 
 \begin{example}[Poincar\'e's Theorem]
 Let $D$ be the closed unit disc in $\R^2$. Then $H^1(D) = 0$. 
 
To see this, let $G \subset D$ be the graph having one vertex $v$ on $\boundary D$, one edge $e$ on $\boundary D$ and one face $\sigma$. Orient the edge counter-clockwise, as in Figure \ref{Disc}. To show that $H^1(D) = 0$, we must show that $\ker \whirl\subset \im \ob{\tilt}$. To that end, let $\vect{F}$ be a CVF on $(D,G)$ such that $\whirl \vect{F} = 0$. Since $G$ has a single face and since the edge $e$ is the boundary of that face, we must have 
\[
\vect{F}(e) = \cint_e \vect{F} = \whirl \vect{F}(\sigma) = 0.
\]
Defining $f(v) = 0$, we obviously have $\vect{F} = \ob{\tilt} f$, as desired. 
\end{example}

\begin{figure}[ht]
\labellist \small\hair 2pt 
\pinlabel {$v$} [b] at 174 352
\pinlabel {$e$} [bl] at 281 310
\pinlabel {$\sigma$} at 174 174 
\endlabellist 
\centering 
\includegraphics[scale=0.4]{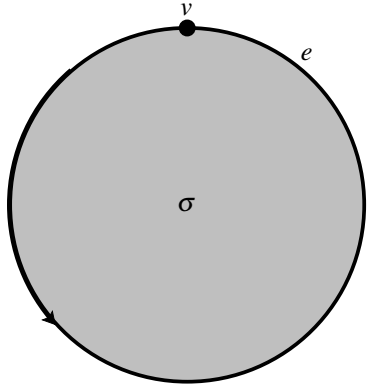}
\caption{The disc $D$ together with a graph $G$ having one vertex, one edge, and one face.}
\label{Disc}
\end{figure}

A moderately more difficult result is:

\begin{example}\label{annulus}
If $S = \{x \in \R^2 : 1 \leq ||x|| \leq 2|\}$, then $\dim H^1(S) \neq 0$. 

To see this, choose $G \subset S$ to be the graph with two vertices on each boundary component of $S$ and two edges in the interior of $S$. Fix orientations on the edges of $G$ as in Figure \ref{Fig: Annulus}.  Let $\vect{F}\co \mc{E} \to \R$ assign 1 to an edge on the inner boundary component, -1 to an edge on the outer boundary component  and 0 to the other edges as in Figure \ref{Fig: Annulus} (and give the edges the same orientations as in Figure \ref{Fig: Annulus}.) It is easy to verify that $\whirl \vect{F} = 0$. Let $\alpha$ be the inner boundary component of $S$, and notice that $\int_{\alpha} \vect{F} = 1$. Thus, by \hyperref[Episode 1A]{Episode 1'}, $\vect{F}$ cannot be in the image of $\ob{\tilt}$. Hence, $H^1(S) \neq 0$.
\end{example}

\begin{figure}[ht]
\labellist \small\hair 2pt 
\pinlabel {$0$} [br] at 89 316
\pinlabel {$0$} [bl] at 189 228
\pinlabel {$0$} [b] at 73 164
\pinlabel {$0$} [b] at 257 164
\pinlabel {$1$} [br] at 178 112
\pinlabel {$-1$} [br] at 201 14
\endlabellist 
\centering 
\includegraphics[scale=.5]{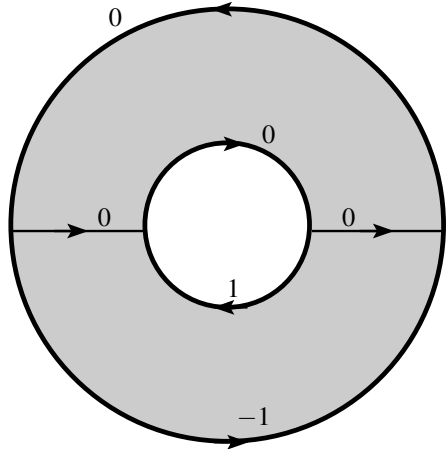}
\caption{An annulus combinatorial surface and a canonical CVF on it that has whirl equal to 0, but is not a tilt field}
\label{Fig: Annulus}
\end{figure}

\begin{challenge}
Prove that, in fact, $\dim H^1(S) = 1$, for the annulus $S$ in Example \ref{annulus}. That is, prove that for any canonical CVF $\vect{G}$ on $S$, there is a VSF $f$ on $(S,G)$ such that 
\[\vect{G} = k\vect{F} + \ob{\tilt} f\]
where $k \in \R$ and $\vect{F}$ is the CVF from the proof of Theorem \ref{annulus}. Indeed, prove that for any combinatorial surface $(S,G)$ with $S \subset \R^2$ having $n$ boundary components, then $\dim H^1(S) = n - 1$. 
\end{challenge}

\subsection{The Good Screen Actor's Guild}
We can perform similar constructions with the original stooges. The resulting cohomology groups are called the \defn{de Rham cohomology groups}.  Let $C^0_{dR}(S)$ denote the vector space of C$^2$ scalar fields on $S$, let $C_{dR}^1(S)$ denote the vector space of C$^1$ vector fields on $S$ and let $C_{dR}^2(S)$ denote the vector space of C$^0$ scalar fields on $S$. We then have the linear maps:
\[
C_{dR}^0(S) \stackrel{\grad}{\to} C_{dR}^1(S) \stackrel{\scurl}{\to} C_{dR}^2(S).
\]
These cochain groups are, unlike the combinatorial versions, infinite dimensional vector spaces. Nonetheless, as before we can form cohomology groups $H^0_{dR}(S)$, $H^1_{dR}(S)$, and $H^2_{dR}(S)$ as before and theorems from algebraic topology tell us that these vector spaces are isomorphic to the vector spaces $H^0(S)$, $H^1(S)$, and $H^2(S)$, respectively. The ambitious reader might like to try to prove this using the techniques from Section \ref{Section: Refining}.

The following traditional result is the analogue of Example \ref{annulus}.
\begin{challenge}
Let $\vect{F}(x,y) = \frac{1}{x^2 + y^2}\begin{pmatrix} -y \\ x \end{pmatrix}$ be a vector field on the annulus $S$ from Theorem \ref{annulus}. Prove that $\scurl \vect{F} = 0$ but that $\vect{F}$ is not in the image of Grad. Furthermore, if $\vect{G}$ is any $C^1$ vector field on $S$ with $\scurl \vect{G} = 0$, prove that there is a constant $k \in \R$ and a C$^2$ scalar field $f$ on $S$ such that $\vect{G} = k\vect{F} + \grad f$. Consequently, $H^1_{dR}(S)$ has dimension 1.
\end{challenge}

\subsection{Further explorations}
In the previous sections, we defined two types of cohomology groups: the combinatorial cohomology groups $H^i(S)$ and the de Rham cohomology groups $H^i_{dR}(S)$. The inquiring reader is bound to ask several questions:
\begin{enumerate}
\item Why are they called cohomology \emph{groups} rather than, say, cohomology \emph{vector spaces}? 
\item Where does the term ``cohomology'' come from and is there such a thing as a homology group?
\item How should all this be generalized to higher dimensions?
\end{enumerate}

The first question is easiest to answer: Just as our creation of the combinatorial cohomology groups doesn't change much when we replace vector spaces over $\R$ with vector spaces over some other field (such as $\C$ or $\Z/2\Z$), so it doesn't change when we replace the vector spaces $C^i(S,G)$ with abelian groups. If you know some abstract algebra, you might enjoy working through the constructions for the case when $\R$ is replaced by $\Z/4\Z$ or some other finite abelian group. The term ``cohomology group'' is derived from this more general setting.

To answer the second and third questions: notice that, in the combinatorial setting, we could also have created vector spaces as follows. Let $C_0(S,G)$ be the vector space with basis in one-to-one correspondence with the vertices of $G$. Let $C_1(S,G)$ be the vector space with basis in one-to-one correspondence with the set of edges of $G$, each with a fixed orientation. Finally, let $C_2(S,G)$ be the vector space with basis in one-to-one correspondence with the faces of $S$, each with fixed orientation. Define linear maps $\boundary_2$ and $\boundary_1$:
\[
C_0(S,G) \stackrel{\boundary_1}{\leftarrow} C_1(S,G) \stackrel{\boundary_2}{\leftarrow} C_2(S,G)
\]
by defining them on the given basis of the domain vector space as follows. For a face $\sigma$ of $G$, let $\boundary_2(\sigma)$ be equal to the sum of the elements of $C_1(S)$ corresponding to the edges of $S$ with the orientation induced by that of $\sigma$. For an oriented edge $e$ of $G$ with initial endpoint $v$ and terminal endpoint $w$, let $\boundary_1(e)$ be equal to the basis element of $C_0(S)$ corresponding to the vertex $w$ minus the basis element of $C_1(S)$ corresponding to $v$. It is easy to verify that $\boundary_1 \circ \boundary_2$ is the zero map and so we define vector spaces $H_0(S) = \im \boundary_1$, $H_1(S) = \ker \boundary_1 / \im \boundary_2$ and $H_2(S) = \ker \boundary_2$. The vector spaces  $H_i(S)$ are called the \defn{homology groups} of $S$ and the vector spaces $C_i(S)$ are called the \defn{chain groups} of $(S,G)$. The cochain groups are the dual vector spaces to the chain groups (hence, the terminology) and the tilt and whirl functions are the dual maps to the ``boundary'' maps $\boundary_1$ and $\boundary_2$. This perspective suggests a way of generalizing the combinatorial setup to higher dimensions: If we have a so-called ``simplicial complex'' (essentially a higher dimensional graph) we can form chain groups and homology groups for the simplicial complex in strict analogy with the surface (2-dimensional) case. The higher dimensional combinatorial cochain groups are then the ``dual'' vector spaces to the chain groups, the higher-dimensional versions of tilt and whirl are the dual maps to the boundary maps and the higher dimensional cohomology groups are quotient vector spaces. In the two-dimensional case it is easy to verify that $H_1(S)$ has the same dimension as $H^1(S)$. The higher dimensional version of this (for objects called ``manifolds'') is known as ``Poincar\'e duality''. Any algebraic topology book (such as \cite{Hatcher}) will have lots more to say about homology and cohomology groups for simplicial complexes and their generalizations.

The best way of generalizing the de Rham cohomology groups to higher dimensions is to develop the theory of ``differential manifolds'' and ``differential forms''. Many vector calculus texts (eg. \cite{Colley, MarsdenTromba}) have a section on differential forms and more can be learned from books such as \cites{Bachman, Edwards}. The corresponding de Rham cohomology theory can be learned from any differential topology book (e.g. \cite{Bredon}).

We conclude this viewer's guide with one last challenge: For most of the article we have ignored divergence and its impersonator ebb,

\begin{challenge}
How do ebb and divergence fit into the theory of combinatorial and de Rham (co)homology groups? Can you prove a ``coarse actor's" version of the Divergence (or Gauss') Theorem from vector calculus?
\end{challenge}

\section{The Credits}
As is abundantly evident to those who know some algebraic or differential topology, apart from the presentation, very little of the mathematics in this article is actually new. We believe, however, that is is useful and pedagogically productive to draw explict analogies, phrased in the language of vector calculus, between the combinatorial and differential versions of cohomology theory. The article was spawned by the difficulty of finding a proof of Green's theorem that illuminated, rather than obscured, the basic ideas, but still applied to a very wide class of surfaces and vector fields. It would be surprising if Theorems \ref{MVTR} and \ref{Thm: COV} were genuinly new, but we have not been able to find them in the literature (although Apostol's proof \cite[Theorem 11.36]{Apostol} is similar to Theorem \ref{Thm: COV}). We also hope that this article will be useful to students encountering homology and cohomology for the first time and that the basic approach to Green's theorem and scalar curl will be helpful to beginning vector calculus students. The third author has successfully used some of these ideas in the classroom. We thank Otto Bretscher and Fernando Gouv\^{e}a for helpful conversations and Colby College for supporting the work of the first two authors.

\begin{bibdiv}
\begin{biblist}

\bib{Apostol}{book}{
   author={Apostol, Tom M.},
   title={Mathematical analysis: a modern approach to advanced calculus},
   publisher={Addison-Wesley Publishing Company},
   place={Inc., Reading, Mass.},
   date={1957},
   pages={xii+553},
   review={\MR{0087718 (19,398e)}},
}

\bib{Bachman}{book}{
   author={Bachman, David},
   title={A geometric approach to differential forms},
   publisher={Birkh\"auser Boston Inc.},
   place={Boston, MA},
   date={2006},
   pages={xviii+133},
   isbn={978-0-8176-4499-4},
   isbn={0-8176-4499-7},
   review={\MR{2248379 (2007d:58003)}},
}

\bib{Bredon}{book}{
   author={Bredon, Glen E.},
   title={Topology and geometry},
   series={Graduate Texts in Mathematics},
   volume={139},
   note={Corrected third printing of the 1993 original},
   publisher={Springer-Verlag},
   place={New York},
   date={1997},
   pages={xiv+557},
   isbn={0-387-97926-3},
   review={\MR{1700700 (2000b:55001)}},
}

\bib{Browder}{book}{
   author={Browder, Andrew},
   title={Mathematical analysis},
   series={Undergraduate Texts in Mathematics},
   note={An introduction},
   publisher={Springer-Verlag},
   place={New York},
   date={1996},
   pages={xiv+333},
   isbn={0-387-94614-4},
   review={\MR{1411675 (97g:00001)}},
   doi={10.1007/978-1-4612-0715-3},
}

\bib{Colley}{book}{
   author={Colley, Susan J.},
   title={Vector Calculus},
   publisher={Pearson},
   place={Boston, MA},
   date={2011},
   isbn={978-0321780652},
   isbn={0321780655},
}

\bib{Edwards}{book}{
   author={Edwards, Harold M.},
   title={Advanced calculus},
   note={A differential forms approach;
   Corrected reprint of the 1969 original;
   With an introduction by R. Creighton Buck},
   publisher={Birkh\"auser Boston Inc.},
   place={Boston, MA},
   date={1994},
   pages={xvi+508},
   isbn={0-8176-3707-9},
   review={\MR{1249272 (94k:26001)}},
}

\bib{MG}{book}{
author = {Green, Michael}
title={Downwind of Upstage: The Art of Coarse Acting}
publisher={Hawthorn Books}
place={New York City}
date={1964}
}

\bib{Hatcher}{book}{
   author={Hatcher, Allen},
   title={Algebraic topology},
   publisher={Cambridge University Press},
   place={Cambridge},
   date={2002},
   pages={xii+544},
   isbn={0-521-79160-X},
   isbn={0-521-79540-0},
   review={\MR{1867354 (2002k:55001)}},
}

\bib{MarsdenTromba}{book}{
   author={Marsden, Jerrold},
   author={Tromba, Anthony},
   title={Vector Calculus},
   publisher={W.H. Freeman},
    date={2003},
   isbn={716749920},
}

\bib{Munkres}{book}{
   author={Munkres, James R.},
   title={Analysis on manifolds},
   publisher={Addison-Wesley Publishing Company Advanced Book Program},
   place={Redwood City, CA},
   date={1991},
   pages={xiv+366},
   isbn={0-201-51035-9},
   review={\MR{1079066 (92d:58001)}},
}

\bib{Schey}{book}{
   author={Schey, H.M.},
   title={Div, Grad, Curl and All That: An Informal Text on Vector Calculus},
   publisher={W.W. Norton \& Company},
   place={Redwood City, CA},
   date={2005},
    isbn={0393925161},
}

\end{biblist}
\end{bibdiv}

\end{document}